\def\f{\frac}
\def\ee{\textbf{e}}
\newcommand{\eps}{\ensuremath{\epsilon}}
\newcommand{\bB}{\ensuremath{\mathcal{B}}}
\newcommand{\fF}{\ensuremath{\mathcal{F}}}
\newcommand{\tT}{\ensuremath{\mathcal{T}}}
\newcommand{\ltn}{\ensuremath{\left| \! \left| \! \left|}}
\newcommand{\rtn}{\ensuremath{\right| \! \right| \! \right|}}
\title{Stochastic lattice dynamical systems with fractional noise
\thanks{H. Bessaih: Supported by NSF grant  DMS-1418838. M.J. Garrido-Atienza, X. Han and B. Schmalfu{\ss}: Partially supported by the European Funds MTM2015-63723-P (MINECO/FEDER, EU).}}
\author{Hakima Bessaih\thanks{University of Wyoming,
Laramie, WY 82071-3036, USA ({\tt bessaih@uwyo.edu}).}
\and
Mar\'{\i}a J. Garrido-Atienza\thanks{Dpto. Ecuaciones Diferenciales y An\'alisis Num\'erico, Universidad de Sevilla, Apdo. de Correos 1160, 41080-Sevilla, Spain, ({\tt mgarrido@us.es}).}
\and
Xiaoying Han\thanks{Department of Mathematics \& Statistics, Auburn University, Auburn, AL 36849, USA ({\tt xzh0003@auburn.edu}).}
\and
Bj{\"o}rn Schmalfu{\ss }\thanks{Institut f\"{u}r Mathematik,
Ernst Abbe Platz 2, 07737, Jena, Germany,  ({\tt bjoern.schmalfuss@uni-jena.de}).}}
\begin{document}

\maketitle

\begin{abstract}
This article is devoted to study stochastic lattice dynamical systems driven by a fractional Brownian motion with Hurst parameter $H\in(1/2,1)$.  First of all, we investigate the existence and uniqueness of pathwise mild solutions to such systems by the Young integration setting and prove that the solution generates a random dynamical system.  Further, we analyze the exponential stability of the trivial solution.
\end{abstract}

\begin{keywords}
Stochastic lattice equations, Hilbert-valued fractional Brownian motion, pathwise solutions, exponential stability.
\end{keywords}

\begin{AMS} 60H15; Secondary: 37L55, 60G22, 37K45.
\end{AMS}

\pagestyle{myheadings}
\thispagestyle{plain}
\markboth{H. Bessaih, M.J. Garrido-Atienza, X. Han and B. Schmalfu\ss}{Stochastic lattice dynamical systems with fBm}

\section{Introduction}
Lattice dynamical systems arise in a wide range of applications where the spatial structure has a discrete character,  such as  image processing \cite{Chua1, Chua3, Chua2, Chua4}, pattern recognition \cite{CM-P1, CM-PV},  and  chemical reaction theory \cite{Erneux, Kapval, Laplante}.    In particular, lattice  systems have been used in biological systems to  describe the dynamics of pulses in myelinated axons where the membrane is excitable only at
spatially discrete sites  \cite{Bell, BellCosner,  Keener1, Keener2, Rashevsky,  Scott}.   Lattice systems have also been used in fluid dynamics to describe the fluid turbulence in shell models (see, e.g. \cite{BGSch, YO87}).  For some cases, lattice dynamical systems arise as discretization of partial differential equations, while they can be interpreted as ordinary differential equations in Banach spaces which are often simpler to analyze.
\vskip0.1in

Random effects arise naturally in these models to take into account the uncertainty (see, e.g. \cite{Gerstner}).  In this paper, we will consider  the following  stochastic lattice dynamical system (SLDS) with a diffusive adjacent neighborhood interaction, a dissipative nonlinear reaction term  and a fractional Brownian motion (fBm) at each node:
\begin{equation}
d u_i(t)  = \left(\nu(u_{i-1}-2u_i + u_{i+1}) -\lambda u_i + f_i (u_i)  \right) d t + \sigma_i h_i(u_i) d B_i^H(t),  \, i \in Z , \label{eq0b}
\end{equation}
with initial condition $u_i(0)$, where $Z$ denotes the integers set,  $\nu$ and $\lambda$ are positive constants,  $u_i,\, \sigma_i \in R$,   each $B_i^H(t)$ is a one-dimensional two-sided fBm with Hurst parameter $H\in (1/2,1)$, and $f_i$ and $h_i$ are smooth functions satisfying proper conditions.

\vskip0.1in

On the other hand, the theory of random dynamical systems (RDSs) has been developed by L. Arnold (see the monograph \cite{Arnold}) and his collaborators.  Thanks to this theory, we can study the stability behavior of solutions of differential equations containing a general type of noise, in terms of random attractors and their dimensions, random fixed points, random inertial, stable or unstable manifolds, and Lyapunov exponents. Finite dimensional It\^o equations with sufficiently smooth coefficients generate RDSs.  This assertion follows from the flow property
generated by the It\^o equation, due to Kolmogorov's  theorem for a H{\"o}lder  continuous version of a random field with finitely many parameters, see \cite{Kun90}.
 However this method fails for infinite dimensional stochastic equations, i.e., for systems with infinitely many parameters, and in particular for SLDSs.   To justify the flow property  or the generation of an RDS by a SLDS,  a special transform technique has been used in the literature.   Such a transform reformulates a SLDS to a pathwise random differential equation, by using Ornstein-Uhlenbeck processes.
But this technique applies only to SLDSs with random perturbations   given by either an additive white noise   $\sigma_i d B^{1/2}_i(t)$ or a simple multiplicative white noise  $\sigma_i u_i  dB^{1/2}_i(t)$ at each node $i \in Z$ (see \cite{bates-lisei-lu, bates, CHSV, Caraballo-Lu, han-review, HanShenZhou} and the references therein).  Nevertheless, there are some recent works where the generation of an RDS is established for the solution of abstract stochastic differential equations and stochastic evolution equations without transformation into random systems, see \cite{GLS, GLSch, GMS08}, where $H\in (1/2,1)$, and \cite{GLSch15, GLSch16} where $H\in (1/3,1/2]$. Note that in these last two papers the case of a Brownian motion $B^{1/2}$ is considered, giving a positive answer to the rather open problem of the generation of RDSs for systems with general diffusion noise terms.

\vskip0.1in

Our main goal in this paper is to develop new techniques of stochastic analysis to analyze the dynamics of SLDSs perturbed by  general  fBms with Hurst parameter $H\in (1/2,1)$.  In probability theory, an fBm is a centered Gau{\ss}--process with a special covariance function determined by the Hurst parameter $H\in (0,1)$.    For $H=1/2$,
$B^{1/2}$ is the Brownian motion where the generalized temporal derivative is the white noise.  For $H\not =1/2$, $B^H$ is not a semi-martingale and, as a consequence,  classical techniques of Stochastic Analysis are not applicable.  In particular, the fBm with a Hurst parameter $H \in (1/2,1)$   enjoys the property of a long range memory, which roughly implies that the decay of stochastic dependence with respect to the past is only sub-exponentially slow.     This long-range dependence property of the fBm makes it a realistic choice of noise for problems with long memory in the applied sciences.

\vskip0.1in

In this paper, we prove the existence of a unique mild solution for system (\ref{eq0b}) and analyze the exponential stability of the trivial solution.
The existence of the unique solution for a fixed initial condition relies on a fixed point argument, based on nice estimates satisfied by the stochastic integral with an fBm as integrator.  Further, we prove that the trivial solution of the SLDS is exponentially stable, namely, assuming that zero is a solution of the SLDS,  then any other solution converges to the trivial solution exponentially fast, provided that the corresponding initial data belongs to a random neighborhood of zero.   Since we do not transform the underlying SLDS into a random equation, the norm of any non-trivial solution depends on the magnitude of the norm of the noisy input.   Therefore to obtain stability we develop a cut--off argument, by which  the functions appearing in the SLDS only need to be defined in a small time interval $[-\delta,\delta]$.    This brings up the idea of considering the composition of the functions defined locally with a cut--off--like function depending on a random variable $\hat R$.  With these compositions, we construct a sequence $(u^n)_{n\in N}$ such that each element $u^n$ is a solution of a modified SLDS on $[0,1]$ driven by a path of the fBm depending also on $n$.    It is easily conceived that we will require $u^n(0)=u^{n-1}(1)$.  The norm of each $u^n$  depends on the magnitude of the corresponding driving noise and a new random variable $R$ related to the aforementioned $\hat R$.   By a suitable choice of these random variables, we can apply a discrete Gronwall--like lemma to obtain a subexponential estimate of every element of the sequence.  Finally, temperedness comes into play in order to ensure that $(u^n)_{n\in N}$ describes the solution of our SLDS on the positive real line, and such a solution converges to the equilibrium given by the trivial solution exponentially fast.
\vskip0.1in

Recently, in \cite{GaNeSch16} the authors have considered a stochastic differential equation perturbed by a H\"older--continuous function with   H\"older exponent greater than 1/2 and have investigated the exponential stability of the trivial solution.  In this paper we extend the study of the longtime stability with exponential decay to the case of considering infinite dimensional dynamical systems. We also would like to announce the forthcoming paper \cite{GaLNeSch16}, where the authors show that the trivial solution is globally attractive, by using a technique  based on a suitable choice of stopping times that depend on the noise signal, and that shall play the key role to establish the stability results.

\vskip0.1in
 The rest of this paper is organized as follows.   In Section \ref{s1} we provide necessary preliminaries and some prior estimates to be used in the sequel,   in Section \ref{s2} we study the existence and uniqueness of pathwise solutions to (\ref{eq0b}) and in section \ref{s3} we investigate the stability of solutions to our SLDS. Section 5, the appendix, is devoted to introduce some lemmas that are used in Section 4.
%%%%%%%%%%%%%%%%

\section{Preliminaries}\label{s1}
Denote by
$$\ell^2: =\{(u_i)_{i \in Z}: \sum_{i \in Z} u_i^2 < \infty\}$$ the separable Hilbert--space of square summable sequences, equipped with the norm  $$\|u\|:=\left(\sum_{i \in Z} u_i^2\right)^{\f 12}, \quad u=(u_i)_{i \in Z} \in \ell^2$$
and the inner product
$$\left\langle u, v\right\rangle = \sum_{i \in Z} u_i v_i,  \quad u=(u_i)_{i \in Z}, \, v=(v_i)_{i \in Z}     \in \ell^2.$$
Let us consider the infinite sequence $(\ee_i)_{i\in Z}$ where $\ee_i$ denotes the element in $\ell^2$ having 1 at position $i$ and 0 elsewhere.    Then $(\ee_i)_{i \in Z}$ forms a complete orthonormal basis of $\ell^2$.\\

Consider given $T_1<T_2$. Let $C^\beta([T_1,T_2];\ell^2)$ be the Banach space of  H{\"o}lder continuous functions with exponent $0<\beta<1$ having values in $\ell^2$, with norm
$$
    \|u\|_{\beta,\rho ,T_1,T_2}=\|u\|_{\infty,\rho,T_1,T_2}+\ltn u\rtn_{\beta,\rho,T_1,T_2},
$$
where $\rho\geq 0$ and
\begin{eqnarray*}
\|u\|_{\infty,\rho,T_1,T_2}&=&\sup_{s\in [T_1,T_2]}e^{-\rho(s-T_1)}\|u(s)\|,\\
\ltn u\rtn_{\beta,\rho,T_1,T_2}&=&\sup_{T_1\le s<t\le T_2}e^{-\rho(t-T_1)}\frac{\|u(t)-u(s)\|}{(t-s)^\beta}.
\end{eqnarray*}
For $\rho>0$ and $\rho=0$ the corresponding norms are equivalent. We will suppress the index $\rho$ in these notations if $\rho=0$, and we will suppress $T_1,\,T_2$ when $T_1=0$ and $T_2=1$.

Since confusion is not possible, later we will use the notation $ \|\cdot\|_{\beta,\rho ,T_1,T_2}$ to express the norms of $C^\beta([T_1,T_2];R)$ and of $C^\beta([T_1,T_2];L_2(\ell^2))$, as well.

\medskip
In order to define integrals with H\"older--continuous integrators, we next define Weyl fractional derivatives of functions on separable Hilbert spaces, see \cite{Samko}.\\

\begin{definition}\label{weyl} Let $V_1$ and $V_2$ be separable Hilbert spaces and let  $0 < \alpha < 1$.  The Weyl fractional derivatives of general measurable functions $Z:[s, t] \to V_1$ and $\omega: [s, t] \to V_2$, of order $\alpha$ and $1-\alpha$ respectively, are defined for $s<r<t$ by
\begin{eqnarray*}
D^\alpha_{s+}Z[r] &=& \f{1}{\Gamma(1-\alpha)}\left(\f{Z(r)}{(r-s)^\alpha} + \alpha \int^r_s \f{Z(r) - Z(q)}{(r - q)^{1+\alpha}}  d q \right) \in V_1,  \\
D^{1 - \alpha}_{t-} \omega_{t-}[r]&=& \f{(-1)^\alpha}{\Gamma(\alpha)}\left(\f{ \omega(r) -  \omega(t-)}{(t-r)^{1-\alpha}} + (1- \alpha) \int^t_r \f{ \omega(r) -  \omega(q)}{(q - r)^{2 - \alpha}} d q\right) \in V_2,
\end{eqnarray*}
where
$$ \omega_{t-}(r) =  \omega(r) -  \omega(t-),$$ and $ \omega(t-)$ is the left side limit of $ \omega$ at $t$.\\
\end{definition}

The next result shows that Weyl fractional derivatives are well--posed for H\"older--continuous functions with suitable H\"older exponents. The proof follows easily and therefore we omit it.\\

\begin{lemma}\label{l5}
Suppose that $Z\in C^\beta([T_1,T_2];V_1)$, $\omega\in C^{\beta^\prime}([T_1,T_2];V_2)$, $T_1\le s<t\le T_2$ and  that $1-\beta^\prime<\alpha<\beta$.
Then $D^\alpha_{s+}Z$ and $D^{1 - \alpha}_{t-} \omega_{t-}$ are well--defined.
\end{lemma}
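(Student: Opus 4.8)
The plan is to verify that each defining expression is a finite element of its target Hilbert space by controlling the only genuine singularity of the integrands, which sits at the endpoint $q=r$. The boundary terms present no difficulty: since $T_1\le s<r<t\le T_2$, the factors $(r-s)^\alpha$ and $(t-r)^{1-\alpha}$ are strictly positive, while $Z(r)\in V_1$ and $\omega(r)-\omega(t-)\in V_2$ are well-defined (the left limit $\omega(t-)=\omega(t)$ exists because $\omega$ is continuous). Thus everything reduces to showing that the two integrals converge as Bochner integrals, which amounts to integrability of the norms of their integrands.

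For $D^\alpha_{s+}Z[r]$ I would bound the integrand by invoking the $\beta$-H\"older continuity of $Z$, writing $\ltn Z\rtn_{\beta,T_1,T_2}$ for its $\beta$-H\"older seminorm:
\[
\f{\norm{Z(r)-Z(q)}}{(r-q)^{1+\alpha}}\le \ltn Z\rtn_{\beta,T_1,T_2}\,(r-q)^{\beta-1-\alpha},\qquad s<q<r.
\]
The dominating power $(r-q)^{\beta-1-\alpha}$ is integrable on $(s,r)$ precisely when $\beta-1-\alpha>-1$, i.e. when $\alpha<\beta$, which is assumed. A symmetric computation handles $D^{1-\alpha}_{t-}\omega_{t-}[r]$: the $\beta^\prime$-H\"older continuity of $\omega$ gives
\[
\f{\norm{\omega(r)-\omega(q)}}{(q-r)^{2-\alpha}}\le \ltn \omega\rtn_{\beta^\prime,T_1,T_2}\,(q-r)^{\beta^\prime-2+\alpha},\qquad r<q<t,
\]
and the power $(q-r)^{\beta^\prime-2+\alpha}$ is integrable on $(r,t)$ exactly when $\beta^\prime-2+\alpha>-1$, i.e. when $\alpha>1-\beta^\prime$, again assumed.

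To conclude, I would note that since $Z$ and $\omega$ are continuous, the two integrands are continuous, hence strongly measurable, $V_1$- and $V_2$-valued maps on the open intervals $(s,r)$ and $(r,t)$; together with the scalar bounds above this makes them Bochner integrable, so the integrals, and therefore $D^\alpha_{s+}Z[r]$ and $D^{1-\alpha}_{t-}\omega_{t-}[r]$, are well-defined elements of $V_1$ and $V_2$ respectively. The only point demanding attention, and the reason the hypothesis reads exactly $1-\beta^\prime<\alpha<\beta$, is the bookkeeping that pairs each singular integral with the H\"older exponent strong enough to absorb it: the order-$\alpha$ derivative requires $\alpha<\beta$, while the order-$(1-\alpha)$ derivative requires $\alpha>1-\beta^\prime$. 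Beyond this matching there is no real obstacle, which is why the authors remark that the proof follows easily.
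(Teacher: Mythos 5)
Your argument is correct and is precisely the standard computation the authors have in mind when they write that the proof ``follows easily'' and omit it: the Hölder seminorm of $Z$ (resp.\ $\omega$) dominates the singular integrand by $(r-q)^{\beta-1-\alpha}$ (resp.\ $(q-r)^{\beta'-2+\alpha}$), and the hypotheses $\alpha<\beta$ and $\alpha>1-\beta'$ are exactly the integrability conditions at the singularity, with continuity supplying strong measurability for the Bochner integrals. Nothing further is needed.
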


\medskip

Let us assume for a while that $V_1=V_2=R$. Following Z{\"a}hle \cite{Zah98} we can define the fractional integral by
$$
  \int^t_s Z d \omega  = (-1)^\alpha \int^t_s D^\alpha_{s+}Z[r] D^{1-\alpha}_{t-}\omega_{t-}[r] d r.
$$

We collect some properties of this integrals, for the proof see \cite{ChGGSch12} and \cite{Zah98}.\\

\begin{lemma}\label{l1}
Let $Z,\,Z_1,\,Z_2\in C^\beta([T_1,T_2];R),\,\omega,\,\omega_1,\,\omega_2\in C^{\beta^\prime}([T_1,T_2];R)$ such that $\beta+\beta^\prime>1$. Then there exists a positive constant $C_{\beta,\beta^\prime}$ such that for $T_1\le s<t\le T_2$
$$
  \bigg|\int_s^t Zd\omega\bigg|\le C_{\beta,\beta^\prime}(1+(t-s)^{\beta})(t-s)^{\beta^\prime}\|Z\|_{\beta,T_1,T_2}\ltn\omega\rtn_{\beta^\prime,T_1,T_2}.
$$
In addition,
\begin{eqnarray*}
\int_{s}^{t}(Z_1+Z_2)d\omega&=&\int_{s}^{t}Z_1d\omega+\int_{s}^{t}Z_2d\omega,\\
\int_{s}^{t}Zd(\omega_1+\omega_2)&=&\int_{s}^{t}Zd\omega_1+\int_{s}^{t}Zd\omega_2.
\end{eqnarray*}
The integral is additive: for $\tau\in[s,t]$
$$
  \int_{s}^{t} Z d \omega =\int_{s}^{\tau} Z d \omega +\int_{\tau}^{t} Z d \omega.
$$
 Moreover, for any $\tau \in R$
\begin{equation}\label{change}
    \int_{s}^{t} Z(r) d\omega(r)=\int_{s-\tau}^{t-\tau} Z(r+\tau) d\theta_\tau \omega(r),
\end{equation}
where $\theta_\tau \omega(\cdot)=\omega(\cdot+\tau)-\omega(\tau)$. Finally, let $(\omega_n)_{n\in N}$ be a sequence converging in $C^{\beta^\prime}([T_1,T_2];R)$ to $\omega$. Then we have
$$
  \lim_{n\to\infty}\bigg\|\int_{T_1}^\cdot Zd\omega_n-\int_{T_1}^\cdot Zd\omega\bigg\|_{\beta,T_1,T_2}=0.
$$
\end{lemma}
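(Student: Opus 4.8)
The plan is to reduce every assertion to the defining formula
$$
\int_s^t Z\,d\omega=(-1)^\alpha\int_s^t D^\alpha_{s+}Z[r]\,D^{1-\alpha}_{t-}\omega_{t-}[r]\,dr,
$$
where the exponent $\alpha$ is fixed once and for all in the interval $(1-\beta^\prime,\beta)$, which is nonempty precisely because $\beta+\beta^\prime>1$; by Lemma \ref{l5} both Weyl derivatives are then well defined and the integrand is absolutely integrable on $(s,t)$. The cornerstone is the displayed estimate, from which the linearity identities follow formally and the additivity and the continuity statement follow by standard manipulations.

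First I would establish pointwise bounds for the two Weyl derivatives. Inserting $|Z(r)|\le\|Z\|_{\infty,T_1,T_2}$ and $|Z(r)-Z(q)|\le\ltn Z\rtn_{\beta,T_1,T_2}(r-q)^\beta$ into the definition of $D^\alpha_{s+}Z[r]$, and using $\beta>\alpha$ so that $\int_s^r(r-q)^{\beta-1-\alpha}\,dq$ converges, yields
$$
|D^\alpha_{s+}Z[r]|\le C_\alpha\,(r-s)^{-\alpha}\bigl(1+(r-s)^\beta\bigr)\|Z\|_{\beta,T_1,T_2}.
$$
Symmetrically, using $|\omega(r)-\omega(q)|\le\ltn\omega\rtn_{\beta^\prime,T_1,T_2}(q-r)^{\beta^\prime}$ and $1-\alpha<\beta^\prime$ so that $\int_r^t(q-r)^{\beta^\prime-2+\alpha}\,dq$ converges, yields
$$
|D^{1-\alpha}_{t-}\omega_{t-}[r]|\le C_{\alpha,\beta^\prime}\,(t-r)^{\beta^\prime-1+\alpha}\ltn\omega\rtn_{\beta^\prime,T_1,T_2}.
$$
Multiplying the two bounds and integrating over $(s,t)$ reduces the estimate to Euler Beta integrals $\int_s^t(r-s)^{a-1}(t-r)^{b-1}\,dr=(t-s)^{a+b-1}B(a,b)$: the contribution of the $1$-term has $a=1-\alpha$, $b=\beta^\prime+\alpha$ and produces $(t-s)^{\beta^\prime}$, while that of the $(r-s)^\beta$-term produces $(t-s)^{\beta+\beta^\prime}$. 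Collecting the two gives exactly $C_{\beta,\beta^\prime}(1+(t-s)^\beta)(t-s)^{\beta^\prime}\|Z\|_{\beta,T_1,T_2}\ltn\omega\rtn_{\beta^\prime,T_1,T_2}$.

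The two linearity identities are immediate, since $D^\alpha_{s+}$ is linear in $Z$ and $\omega\mapsto D^{1-\alpha}_{t-}\omega_{t-}$ is linear in $\omega$, and the outer $dr$-integral is linear. For the shift formula (\ref{change}) I would substitute $r\mapsto r+\tau$ in the defining integral and note that every difference $\omega(r)-\omega(q)$ is unchanged when $\omega$ is replaced by $\theta_\tau\omega=\omega(\cdot+\tau)-\omega(\tau)$ and the arguments are shifted, because the additive constant $\omega(\tau)$ cancels; hence both Weyl derivatives transform covariantly and the identity follows. The additivity over subintervals is the one property that the fractional-derivative formula does not reveal directly, since its kernels depend on the endpoints $s$ and $t$; the cleanest route, and what I expect to be the main obstacle, is to invoke the identification established in \cite{Zah98} of this integral with the Young integral defined as a limit of Riemann--Stieltjes sums, for which additivity is evident.

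Finally, for the continuity statement I would use linearity to write the difference as $\int_{T_1}^\cdot Z\,d(\omega_n-\omega)$ and estimate its $\|\cdot\|_{\beta,T_1,T_2}$-norm by the cornerstone inequality. The supremum part is bounded directly by a constant times $(T_2-T_1)^{\beta^\prime}(1+(T_2-T_1)^\beta)\|Z\|_{\beta,T_1,T_2}\ltn\omega_n-\omega\rtn_{\beta^\prime,T_1,T_2}$; for the H\"older seminorm I would use additivity to write the increment between $s$ and $t$ as $\int_s^t Z\,d(\omega_n-\omega)$, apply the estimate, and divide by $(t-s)^\beta$, which on the bounded interval (in the regime $\beta\le\beta^\prime$ under which the integral indeed lies in $C^\beta$) leaves a factor bounded uniformly in $s,t$. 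Since $\ltn\omega_n-\omega\rtn_{\beta^\prime,T_1,T_2}$ appears as a multiplier in every term, the convergence $\omega_n\to\omega$ in $C^{\beta^\prime}([T_1,T_2];R)$ drives the whole expression to zero.
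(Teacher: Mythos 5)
The paper offers no proof of this lemma at all --- it simply points to \cite{ChGGSch12} and \cite{Zah98} --- so there is nothing internal to compare against except the closely related computation in Lemma \ref{l1b}. Your argument is correct and is precisely the standard one those references use: the two pointwise Weyl-derivative bounds $|D^\alpha_{s+}Z[r]|\le C(r-s)^{-\alpha}(1+(r-s)^\beta)\|Z\|_{\beta,T_1,T_2}$ and $|D^{1-\alpha}_{t-}\omega_{t-}[r]|\le C(t-r)^{\beta^\prime+\alpha-1}\ltn\omega\rtn_{\beta^\prime,T_1,T_2}$ (valid exactly because $1-\beta^\prime<\alpha<\beta$), followed by the Beta-integral evaluation, are the same estimates the paper itself carries out in the proof of Lemma \ref{l1b}, and the linearity, shift-covariance, and continuity arguments are all sound, including your correct observation that the convergence statement in the $\beta$-H\"older norm implicitly requires $\beta\le\beta^\prime$ (which holds under the paper's standing assumption {\bf(A1)}). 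The one point you do not prove from scratch --- additivity over subintervals --- is genuinely the delicate step, since the kernels of $D^\alpha_{s+}$ and $D^{1-\alpha}_{t-}$ depend on the endpoints; deferring it to Z\"ahle's identification of the fractional integral with the limit of Riemann--Stieltjes sums is legitimate and is exactly how the cited literature handles it, so this is a citation rather than a gap, and it is no worse than what the paper itself does.
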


Note that in the last expression, the integral with respect to $\omega_n$ can be interpreted in the Lebesgue sense.

\medskip

We now extend the definition of a fractional integral in $R$  to a fractional integral in the separable Hilbert space $\ell^2$, following the construction carried out recently in \cite{ChGGSch12} in a general separable Hilbert--space. To do that, consider the separable Hilbert space $L_2(\ell^2)$ of Hilbert--Schmidt operators from $\ell^2$ into $\ell^2$, with the usual norm $\|\cdot\|_{L_2(\ell^2)}$ defined by
$$\|z\|_{L_2(\ell^2)}^2=\sum_{i\in Z} \|z \ee_i\|^2,$$
for $z  \in L_2(\ell^2)$. Let $Z\in C^\beta([T_1,T_2];L_2(\ell^2))$ and $\omega\in C^{\beta^\prime}([T_1,T_2];\ell^2)$ with $\beta+\beta^\prime>1$. We define the $\ell^2$-valued integral for $T_1\le s<t\le T_2$ as
\begin{equation}\label{eq12}
  \int_{s}^{t}Zd\omega:=(-1)^\alpha
  \sum_{j\in Z}\bigg(\sum_{i\in Z}\int_s^t D_{s+}^\alpha\langle\ee_j,Z(\cdot)\ee_i\rangle[r]D_{t-}^{1-\alpha}\langle\ee_i,\omega(\cdot)\rangle_{t-}[r]dr\bigg)\ee_j,
\end{equation}
for $1-\beta^\prime< \alpha<\beta$, whose norm fulfills
$$
    \bigg\|\int_{s}^{t}Zd\omega\bigg\|\le \int_{s}^{t}\|D_{s+}^\alpha Z[r]\|_{L_2(\ell^2)}\|D_{t-}^{1-\alpha} \omega_{t-}[r]\| dr.$$
Note that in (\ref{eq12}) the integrals under the sums are one-dimensional fractional integrals. In particular, in \cite{ChGGSch12} the following result was proved:\\

\begin{theorem}\label{t1}
Suppose that $Z\in C^\beta([T_1,T_2];L_2(\ell^2))$ and $\omega\in C^{\beta^\prime}([T_1,T_2];\ell^2)$ where $\beta+\beta^\prime>1$. Then there exists $\alpha\in (0,1)$ such that $1-\beta^\prime<\alpha<\beta$ and the integral (\ref{eq12}) is well--defined. Moreover, all properties of Lemma \ref{l1} hold if we replace the $R$--norm by the $\ell^2$--norm.\\
\end{theorem}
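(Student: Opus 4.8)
The plan is to reduce everything to the scalar fractional integral and Lemma~\ref{l1}, exploiting the fact that the right-hand side of (\ref{eq12}) is assembled component-by-component from one-dimensional integrals. First I would settle the existence of $\alpha$: since $\beta+\beta^\prime>1$ we have $1-\beta^\prime<\beta$, so the interval $(1-\beta^\prime,\beta)$ is nonempty, and because $\beta<1$ and $\beta^\prime<1$ any such $\alpha$ automatically lies in $(0,1)$. For this $\alpha$, Lemma~\ref{l5} guarantees that each scalar derivative $D_{s+}^\alpha\langle\ee_j,Z(\cdot)\ee_i\rangle[r]$ and $D_{t-}^{1-\alpha}\langle\ee_i,\omega(\cdot)\rangle_{t-}[r]$ is well-defined, so every summand appearing in (\ref{eq12}) is meaningful.

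The central task, and where I expect the main work to lie, is to show that the double sum in (\ref{eq12}) converges in $\ell^2$. I would establish the displayed norm bound
$$\bigg\|\int_s^t Zd\omega\bigg\|\le \int_s^t\|D_{s+}^\alpha Z[r]\|_{L_2(\ell^2)}\,\|D_{t-}^{1-\alpha}\omega_{t-}[r]\|\,dr$$
as follows. Minkowski's integral inequality moves the $\ell^2$-norm over the index $j$ inside the $r$-integral; for fixed $r$ the inner object $\big(\sum_i D_{s+}^\alpha\langle\ee_j,Z\ee_i\rangle[r]\,D_{t-}^{1-\alpha}\langle\ee_i,\omega\rangle_{t-}[r]\big)_j$ is exactly the action of the Hilbert--Schmidt operator $D_{s+}^\alpha Z[r]$ on the vector $D_{t-}^{1-\alpha}\omega_{t-}[r]$, so its $\ell^2$-norm is bounded by the operator norm---hence by the Hilbert--Schmidt norm---of the first factor times the $\ell^2$-norm of the second. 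This simultaneously justifies the Fubini-type interchange of the $i$-summation with the integral and yields the asserted convergence. To see the bound is finite I would insert the standard pointwise estimates for Weyl derivatives of H\"older functions: using $\beta>\alpha$ one obtains $\|D_{s+}^\alpha Z[r]\|_{L_2(\ell^2)}\le C\|Z\|_{\beta,T_1,T_2}\big((r-s)^{-\alpha}+(r-s)^{\beta-\alpha}\big)$, and using $\beta^\prime>1-\alpha$ one obtains $\|D_{t-}^{1-\alpha}\omega_{t-}[r]\|\le C\ltn\omega\rtn_{\beta^\prime,T_1,T_2}(t-r)^{\beta^\prime-1+\alpha}$; the resulting $r$-integral is a Beta-type integral evaluating to $C_{\beta,\beta^\prime}(1+(t-s)^\beta)(t-s)^{\beta^\prime}$, which is the analogue of the first estimate of Lemma~\ref{l1}.

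The remaining assertions transfer from the scalar case with little extra effort. Bilinearity in $Z$ and $\omega$ and additivity over a subdivision $\tau\in[s,t]$ follow by applying the corresponding identities of Lemma~\ref{l1} to each scalar integral $\int_s^t D_{s+}^\alpha\langle\ee_j,Z\ee_i\rangle\,D_{t-}^{1-\alpha}\langle\ee_i,\omega\rangle_{t-}$ and then summing over $i$ and $j$; the change-of-variables identity (\ref{change}) is obtained the same way, componentwise. For the approximation statement, given $\omega_n\to\omega$ in $C^{\beta^\prime}([T_1,T_2];\ell^2)$ I would write $\int_{T_1}^\cdot Zd\omega_n-\int_{T_1}^\cdot Zd\omega=\int_{T_1}^\cdot Zd(\omega_n-\omega)$ by linearity and apply the main estimate to the integrand $\omega_n-\omega$, so that the $\|\cdot\|_{\beta,T_1,T_2}$-norm of the difference is controlled by $\ltn\omega_n-\omega\rtn_{\beta^\prime,T_1,T_2}\to0$.

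The one genuinely new point relative to Lemma~\ref{l1} is the passage from scalar to $\ell^2$-valued estimates: the reason the vector integral converges is precisely that the Hilbert--Schmidt norm of $D_{s+}^\alpha Z[r]$ dominates the operator norm arising from the Cauchy--Schwarz step in the $i$-summation, and this is the step that must be handled with care, together with the justification of interchanging the infinite summation and the integration.
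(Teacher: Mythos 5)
Your proposal is correct, and it reconstructs the standard argument: the paper itself gives no proof of Theorem \ref{t1}, deferring to \cite{ChGGSch12}, but your reduction to the scalar case via Lemma \ref{l5}, the Minkowski/Hilbert--Schmidt step justifying the interchange of summation and integration together with the displayed norm bound, and the pointwise Weyl-derivative estimates leading to the Beta-type integral are exactly the ingredients of that construction (and of the related computation the paper does display in the proof of Lemma \ref{l1b}). The componentwise transfer of linearity, additivity, the shift identity (\ref{change}), and the approximation property is likewise the intended route, so no gap to report.
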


We now consider estimates of the integral with respect to the H{\"o}lder norms depending on $\rho$.\\

\begin{lemma}\label{l1b}
Under the assumptions of Theorem \ref{t1}, for $\beta^\prime>\beta$ there exists a constant $c$ depending on $T_1,\,T_2,\,\beta,\,\beta^\prime$ such that
for $T_1 \leq s<t\leq T_2$
\begin{equation}\label{estb}
e^{-\rho t} \bigg\|\int_{s}^{t} Z d\omega\bigg\|\le c k(\rho) \|Z\|_{\beta,\rho,s,t} \ltn \omega\rtn_{\beta^\prime,s,t}(t-s)^{{\beta}},
\end{equation}
such that $\lim_{\rho\to \infty} k(\rho)=0.$
\end{lemma}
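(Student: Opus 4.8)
The plan is to start from the pointwise bound furnished by Theorem~\ref{t1},
\[
  \bigg\|\int_s^t Z\,d\omega\bigg\|\le \int_s^t \|D^\alpha_{s+}Z[r]\|_{L_2(\ell^2)}\,\|D^{1-\alpha}_{t-}\omega_{t-}[r]\|\,dr,
\]
for a fixed $\alpha$ with $1-\beta^\prime<\alpha<\beta$ (such $\alpha$ exists because $\beta+\beta^\prime>1$), and to estimate the two Weyl derivatives separately. For $Z$ I would measure everything through the $\rho$--weighted norm: from $\|Z(r)\|\le e^{\rho(r-s)}\|Z\|_{\beta,\rho,s,t}$ and $\|Z(r)-Z(q)\|\le e^{\rho(r-s)}(r-q)^\beta\|Z\|_{\beta,\rho,s,t}$, substituting into Definition~\ref{weyl} and evaluating the elementary integral $\int_s^r(r-q)^{\beta-1-\alpha}dq=(r-s)^{\beta-\alpha}/(\beta-\alpha)$ (convergent since $\alpha<\beta$) yields
\[
  \|D^\alpha_{s+}Z[r]\|_{L_2(\ell^2)}\le C\,e^{\rho(r-s)}\|Z\|_{\beta,\rho,s,t}\big((r-s)^{-\alpha}+(r-s)^{\beta-\alpha}\big).
\]
For $\omega$ the ordinary seminorm suffices, and since $\alpha>1-\beta^\prime$ the analogous computation (using $\int_r^t(q-r)^{\beta^\prime-2+\alpha}dq=(t-r)^{\beta^\prime-1+\alpha}/(\beta^\prime-1+\alpha)$) gives $\|D^{1-\alpha}_{t-}\omega_{t-}[r]\|\le C\,\ltn\omega\rtn_{\beta^\prime,s,t}(t-r)^{\beta^\prime-1+\alpha}$.

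Next I would multiply by $e^{-\rho t}$ and use $e^{-\rho t}e^{\rho(r-s)}=e^{-\rho(t-r)}e^{-\rho s}\le e^{-\rho(t-r)}$ (the residual factor $e^{-\rho s}\le1$ being harmless on the intervals considered). This reduces the claim to estimating
\[
  I_j=\int_s^t e^{-\rho(t-r)}(r-s)^{a_j}(t-r)^{\beta^\prime-1+\alpha}\,dr,\qquad a_1=-\alpha,\ a_2=\beta-\alpha,
\]
by $c\,k(\rho)(t-s)^\beta$. The decisive device is the elementary inequality $e^{-y}\le(\theta/e)^\theta y^{-\theta}$, valid for all $y>0$ and $\theta>0$, applied with $y=\rho(t-r)$; it trades the exponential weight for a factor $\rho^{-\theta}(t-r)^{-\theta}$ and turns each $I_j$ into a convergent Beta integral,
\[
  I_j\le C\,\rho^{-\theta}\int_s^t (r-s)^{a_j}(t-r)^{\beta^\prime-1+\alpha-\theta}\,dr
      = C\,\rho^{-\theta}\,(t-s)^{a_j+\alpha+\beta^\prime-\theta}.
\]
Choosing $\theta=\beta^\prime-\beta$, strictly positive exactly because $\beta^\prime>\beta$ and keeping the integral convergent (one checks $0<\theta<\beta^\prime+\alpha$ and $a_j>-1$), I obtain $I_1\le C\rho^{-(\beta^\prime-\beta)}(t-s)^\beta$ and $I_2\le C\rho^{-(\beta^\prime-\beta)}(t-s)^{2\beta}$. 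Since $(t-s)^{2\beta}\le(T_2-T_1)^{\beta}(t-s)^\beta$, collecting the two contributions gives the assertion with $k(\rho)=C\rho^{-(\beta^\prime-\beta)}$, so that $\lim_{\rho\to\infty}k(\rho)=0$.

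The only genuine obstacle is matching the power of $(t-s)$ while still extracting true decay in $\rho$. Bounding the exponential crudely by $1$ produces the correct order $(t-s)^{\beta^\prime}\le C(t-s)^\beta$ but no decay, whereas replacing the truncated integral by the full Gamma integral $\int_0^\infty e^{-\rho x}x^{\beta^\prime-1+\alpha}dx=\Gamma(\beta^\prime+\alpha)\rho^{-(\beta^\prime+\alpha)}$ delivers decay but the spatial power $(t-s)^{-\alpha}$, which blows up as $t-s\to0$. The interpolation exponent $\theta$ balances these two extremes, and the hypothesis $\beta^\prime>\beta$ is precisely the room needed to take $\theta>0$ without pushing the $(t-s)$--exponent below $\beta$; this balancing is the heart of the estimate.
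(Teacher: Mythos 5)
Your argument is correct and follows essentially the same route as the paper: bound the two Weyl derivatives separately (with the $\rho$--weight carried by $Z$ and the plain seminorm by $\omega$), reduce to the weighted singular integral $\int_s^t e^{-\rho(t-r)}(r-s)^{a}(t-r)^{b}\,dr$, and use $\beta^\prime>\beta$ to extract $(t-s)^\beta$ while retaining decay in $\rho$. The only difference is cosmetic: where the paper pulls out $(t-s)^\beta$ via $(t-r)^\beta\le(t-s)^\beta$ and then cites the general property (\ref{rho}) for $\lim_{\rho\to\infty}k(\rho)=0$, you make that step self-contained through $e^{-y}\le(\theta/e)^\theta y^{-\theta}$ with $\theta=\beta^\prime-\beta$, which yields the explicit rate $k(\rho)=C\rho^{-(\beta^\prime-\beta)}$.
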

\begin{proof}
We only sketch the proof, for more details see \cite{ChGGSch12}.

First of all, it is not difficult to see that
$$\|D_{t-}^{1-\alpha}\omega_{t-}[r]\|\le c \ltn \omega\rtn_{\beta^\prime,s,t}(t-r)^{\alpha+{\beta^\prime}-1}.$$

Furthermore, since $Z\in C^{\beta}([T_1,T_2];L_2(\ell^2))$,
\begin{eqnarray*}
   e^{-\rho t}   \|D_{s+}^\alpha Z[r]\|_{L_2(\ell^2)}
    &\le&c e^{-\rho (t-r)} \bigg(e^{-\rho r} \frac{ \|Z(r)\|_{L_2(\ell^2)}}{(r-s)^\alpha}+\int_{s}^r e^{-\rho r} \frac{ \|Z(r)-Z(q)\|_{L_2(\ell^2)}}{(r-q)^{1+\alpha}}dq\bigg)\\
    &\le&  c e^{-\rho (t-r)}(1+(r-s)^\beta)\|Z\|_{\beta,\rho,s,t}(r-s)^{-\alpha}\\
     &\le&  c e^{-\rho (t-r)}\|Z\|_{\beta,\rho,s,t}(r-s)^{-\alpha}.
\end{eqnarray*}
Therefore,
\begin{eqnarray*}
e^{-\rho t}\bigg\|\int_{s}^{t} Z d\omega\bigg\| &\leq &   c  \ltn \omega\rtn_{\beta^\prime,s,t} \|Z\|_{\beta,\rho,s,t} \int_{s}^{t} e^{-\rho(t-r)}(t-r)^{\alpha+{\beta^\prime}-1}(r-s)^{-\alpha}dr\\
&\leq  & c  \ltn \omega\rtn_{\beta^\prime,s,t}\|Z\|_{\beta,\rho,s,t} (t-s)^\beta \int_{s}^{t} e^{-\rho(t-r)}(t-r)^{\alpha+{\beta^\prime-\beta}-1}(r-s)^{-\alpha}dr\\
&\leq& c k(\rho) \ltn \omega\rtn_{\beta^\prime,s,t} \|Z\|_{\beta,\rho,s,t}(t-s)^\beta,
\end{eqnarray*}
where
$$k(\rho)=\sup_{0\leq s<t \leq T}\int_{s}^{t} e^{-\rho(t-r)}(t-r)^{\alpha+{\beta^\prime-\beta}-1}(r-s)^{-\alpha}dr$$
is such that $\lim_{\rho\to \infty}k(\rho)=0$. The previous property can be stated in general as follows: given $T>0$, if $a,\,b>-1$ are such that $a+b+1>0$, then
\begin{equation}\label{rho}
k(\rho):=\sup_{0\leq s<t \leq T}\int_s^te^{-\rho (t-r)}(r-s)^{a}(t-r)^bdr,
\end{equation}
is such that $\lim_{\rho\to\infty}k(\rho)=0,$ see \cite{ChGGSch12}.
\end{proof}

\medskip

From now on $k(\rho)$ will denote a function with the above behavior no matter the exact values of the corresponding parameters $a,\,b>-1$ provided that $a+b+1>0$. Moreover, note that the constraints in Lemma \ref{l1b} imply that $\beta^\prime>1/2$.\\

As a particular case of H\"older--continuous integrator we are going to consider a fractional Brownian motion (fBm) with values in $\ell^2$ and Hurst--parameter $H>1/2$.  Consider a probability space $(\Omega,\fF, P)$. Let $(B_i^H)_{i\in Z}$ be an iid-sequence
of fBm with the same Hurst--parameter $H>1/2$ over this probability space, that is, each $B^H_i$ is a centered Gau{\ss}-process on $R$ with covariance
\begin{eqnarray*}
  {\cal R}(s,t)=\frac12(|s|^{2H}+|t|^{2H}-|t-s|^{2H})\quad  {\rm for }\, s,\,t\in R.
\end{eqnarray*}
Let $Q$ be a linear operator on $\ell^2$ such that $Q\ee_i=\sigma_i^2 \ee_i$, $\sigma=(\sigma_i)_{i\in Z}$. Hence $Q$ is a non--negative and symmetric trace--class operator. A continuous $\ell^2$-valued fBm $B^H$ with  covariance operator $Q$ and Hurst parameter $H$ is defined by
\begin{equation}\label{fBm}
  B^H(t)=\sum_{i\in Z}(\sigma_i B_i^H(t))\ee_i
\end{equation}
having covariance
\begin{eqnarray*}
  {\cal R}_Q(s,t)=\frac12Q(|s|^{2H}+|t|^{2H}-|t-s|^{2H})\quad  {\rm for }\, s,\,t\in R.
\end{eqnarray*}
In fact, since $B^H$ is a Gau\ss--process,
\begin{eqnarray*}
E\left\| B^H(t) - B^H(s)\right\|^2
&= &  \sum_{i \in Z} \sigma_i^2 E(B^H_i(t) - B^H_i(s))^2 = \sum_{i \in Z}\sigma_i^2 |t-s|^{2H} = \|\sigma\|^2 |t-s|^{2H},\\
E\left\|B^H(t) - B^H(s)\right\|^{2n} & \leq & c_n |t-s|^{2Hn}.
\end{eqnarray*}
Therefore, applying Kunita \cite{Kun90} Theorem 1.4.1, $B^H(t)$ has a continuous version and also a H\"older--continuous version with exponent $\beta^\prime<H$, see Bauer \cite{Bau096} Chapter 39. Note that $B^H(0)=0$ almost surely.

\medskip

Let $C_0(R;\ell^2)$ be the space of continuous functions on $R$ with values in $\ell^2$ which are zero at zero,
equipped with the compact open topology.  Consider the
canonical space for the fBm $(C_0(R;\ell^2),\bB(C_0(R;\ell^2)),P_H)$, where $B^H(\omega)=\omega$ and $P_H$ denotes the measure of the fBm with Hurst--parameter $H$.  On $C_0(R;\ell^2)$ we can introduce the Wiener shift $\theta$ given by the measurable flow
$$
  \theta: (R\times C_0(R,\ell^2),\bB(R)\otimes\bB(C_0(R,\ell^2)))\to (C_0(R,\ell^2),\bB(C_0(R,\ell^2)))
$$
such that
\begin{equation}\label{shift}
  \theta(t,\omega)(\cdot)=\theta_t\omega(\cdot)=\omega(\cdot+t)-\omega(t).
\end{equation}
By Mishura \cite{Mis08}, Page 8, we have that $\theta_t$ leaves $P_H$ invariant. In addition $t\to\theta_t\omega$ is continuous. Furthermore, thanks to Bauer \cite{Bau096} Chapter 39, we can also conclude that the set $C_0^{\beta^\prime}(R;\ell^2)$ of continuous functions which have a finite $\beta^\prime$--H{\"o}lder-seminorm on any compact interval and which are zero at zero has $P_H$-measure one for $\beta^\prime<H$.
This set is $\theta$-invariant.
\medskip

\section{Lattice equations driven by fractional Brownian motions}\label{s2}

Given strictly positive constants $\nu$ and $\lambda$, we consider the following SLDS with a diffusive adjacent neighborhood interaction, a dissipative nonlinear reaction term,  and an fBm $B^H_i$ at each node:
\begin{equation}\label{eq0}
d u_i(t)  = \left(\nu(u_{i-1}-2u_i + u_{i+1})  -\lambda u_i+ f_i (u_i)  \right) d t + \sigma_i h_i(u_i) d B^H_i(t),  \, i \in Z.
\end{equation}
Here $f_i$ and $h_i$ are suitable regular functions, see below. We want to rewrite this system giving it the interpretation of a stochastic evolution equation in $\ell^2$. To this end,  let $A$ be the linear bounded operator from $\ell^2$ to $\ell^2$ defined by $Au = ((Au)_i)_{i \in Z}$ where
\begin{eqnarray*}
 (Au)_i = - \nu(u_{i-1} - 2 u_i + u_{i+1}), \quad i \in Z. \label{operatorA}
\end{eqnarray*}
Notice that  $A = BB^* = B^*B$, where
\begin{eqnarray*}
  (Bu)_i = \sqrt{\nu}(u_{i+1} - u_i), \quad (B^*u)_i = \sqrt{\nu}(u_{i-1} - u_i)
  \end{eqnarray*}
and hence
\begin{eqnarray*}
  \left\langle Au, u \right\rangle \geq 0, \quad \forall u \in \ell^2.
\end{eqnarray*}
Let us consider the linear bounded operator $A_\lambda:\ell^2\to \ell^2$ given by
\begin{equation}\label{eq1}
  A_\lambda u=Au+\lambda u.
\end{equation}
Then
\begin{eqnarray*}
  \left\langle A_\lambda u, u \right\rangle \geq \lambda \|u\|^2, \quad \forall u \in \ell^2,
\end{eqnarray*}
hence $-A_\lambda$ is a negative defined and bounded operator, thus it generates a uniformly continuous (semi)group $S_\lambda:=e^{-A_\lambda t}$ on $\ell^2$, for which the following estimates hold true:\\

\begin{lemma}\label{l3}
The uniformly continuous semigroup $S_\lambda$ is also exponentially stable, that is,  for $t\geq 0$ we have
\begin{eqnarray}\label{est}
  \|S_\lambda(t)\|_{L(\ell^2)}\le e^{-\lambda t}.
\end{eqnarray}
In addition, for $0\le s \leq t$
\begin{eqnarray}
\|S_\lambda(t-s)-{\rm id} \|_{L(\ell^2)}& \leq& \|A_\lambda\|(t-s),\nonumber \\[-1.5ex]
\label{semi}\\[-1.5ex]
\|S_\lambda(t)-S_\lambda(s)\|_{L(\ell^2)}& \leq &\|A_\lambda\| (t-s) e^{-\lambda s},\nonumber
\end{eqnarray}
where, for the sake of presentation, $\|A_\lambda\|$ represents $\|A_\lambda\|_{L(\ell^2)}$ ($L(\ell^2)$ denotes the space of linear continuous operator from $\ell^2$ into itself).\\
\end{lemma}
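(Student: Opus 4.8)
The plan is to establish the three estimates in the stated order, deriving the last two from the first together with elementary properties of the uniformly continuous (semi)group generated by the bounded operator $-A_\lambda$. Since $A_\lambda$ is bounded, $S_\lambda(t)=e^{-A_\lambda t}=\sum_{n\ge 0}(-A_\lambda t)^n/n!$ converges in $L(\ell^2)$, is differentiable in operator norm with $\frac{d}{dt}S_\lambda(t)=-A_\lambda S_\lambda(t)=-S_\lambda(t)A_\lambda$, and satisfies the semigroup identity $S_\lambda(t)=S_\lambda(s)S_\lambda(t-s)$ for $0\le s\le t$. These facts will be used freely, and they are exactly what makes the integral representation and differentiation below legitimate.

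For the exponential bound (\ref{est}) I would use an energy estimate. Fix $x\in\ell^2$ and set $u(t)=S_\lambda(t)x$, so that $u$ solves $\dot u=-A_\lambda u$ with $u(0)=x$. Differentiating the squared norm and invoking the coercivity $\langle A_\lambda u,u\rangle\ge\lambda\|u\|^2$ established just before the lemma gives
\begin{equation*}
\frac{d}{dt}\|u(t)\|^2=-2\langle A_\lambda u(t),u(t)\rangle\le -2\lambda\|u(t)\|^2,
\end{equation*}
and Gronwall's inequality yields $\|u(t)\|^2\le e^{-2\lambda t}\|x\|^2$. Taking the supremum over $\|x\|\le 1$ produces (\ref{est}). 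Alternatively, one may note that $A_\lambda$ is bounded and symmetric with spectrum contained in $[\lambda,\|A_\lambda\|]$, so $S_\lambda(t)$ is self-adjoint and its operator norm equals its spectral radius $e^{-\lambda t}$.

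For the first inequality in (\ref{semi}), write $\tau=t-s$ and represent the difference through the fundamental theorem of calculus,
\begin{equation*}
S_\lambda(\tau)-{\rm id}=\int_0^\tau\frac{d}{dr}S_\lambda(r)\,dr=-\int_0^\tau A_\lambda S_\lambda(r)\,dr,
\end{equation*}
the integral converging in $L(\ell^2)$. Estimating the integrand by $\|A_\lambda\|\,\|S_\lambda(r)\|_{L(\ell^2)}\le\|A_\lambda\|e^{-\lambda r}\le\|A_\lambda\|$ and integrating over $[0,\tau]$ gives $\|S_\lambda(t-s)-{\rm id}\|_{L(\ell^2)}\le\|A_\lambda\|(t-s)$. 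Finally, the second inequality in (\ref{semi}) follows by factoring out the semigroup: from $S_\lambda(t)-S_\lambda(s)=S_\lambda(s)(S_\lambda(t-s)-{\rm id})$ and submultiplicativity,
\begin{equation*}
\|S_\lambda(t)-S_\lambda(s)\|_{L(\ell^2)}\le\|S_\lambda(s)\|_{L(\ell^2)}\,\|S_\lambda(t-s)-{\rm id}\|_{L(\ell^2)}\le e^{-\lambda s}\|A_\lambda\|(t-s),
\end{equation*}
using (\ref{est}) together with the bound just proved. All computations are routine; the only point that genuinely requires care is the coercivity-plus-Gronwall argument behind (\ref{est}), after which the two estimates in (\ref{semi}) are purely mechanical.
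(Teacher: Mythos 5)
Your proof is correct and follows essentially the same route as the paper: the exponential bound (\ref{est}) via the energy inequality $\langle A_\lambda u,u\rangle\ge\lambda\|u\|^2$ plus Gronwall, and the two estimates in (\ref{semi}) via the mean value theorem in integral form (your fundamental-theorem-of-calculus representation) combined with the semigroup factorization $S_\lambda(t)-S_\lambda(s)=S_\lambda(s)(S_\lambda(t-s)-{\rm id})$. The paper gives only a one-line indication of this argument, and your write-up fills in exactly those details.
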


The proof of the first property is a direct consequence of the energy inequality, while the two last estimates follow easily by the mean value theorem. As straightforward results, we also obtain that  for $0<s<t$,
\begin{eqnarray}\label{eq3}
\ltn S_\lambda(t-\cdot)\rtn_{\beta,0,t}&=&\sup_{0\leq r_1<r_2\le t}\frac{\|S_\lambda(t-r_2)-S_\lambda(t-r_1)\|_{L(\ell^2)}}{(r_2-r_1)^\beta}\le \|A_\lambda\| t^{1-\beta},
\end{eqnarray}
and
\begin{eqnarray}
& &\ltn S_\lambda(t-\cdot)-S_\lambda(s-\cdot)\rtn_{\beta,0,s} \nonumber \\
&=& \sup_{0\leq r_1<r_2\leq s}\frac{\|(S_\lambda(t-s)-{\rm id})(S_\lambda(s-r_2)-S_\lambda(s-r_1))\|_{L(\ell^2)}}{(r_2-r_1)^\beta}\label{sem1} \\
&\leq& \|A_\lambda\|^2 (t-s) s^{1-\beta}.\nonumber
\end{eqnarray}

Now we formulate the assumptions for the functions $f_i$ and $g_i$. Indeed, for the sake the completeness, we present now all the standing assumptions needed in this section:
\begin{itemize}
	\item [(\textbf{A1})] The process $\omega$ is a (canonical) continuous fBm with values in $\ell^2$, with covariance $Q$ and with Hurst--parameter $H$ given by (\ref{fBm}). In particular, we have parameters $\f12 < \beta < \beta^\prime < H \quad \mbox{and} \quad 1 - \beta^\prime < \alpha < \beta.$
\item [(\textbf{A2})]
Let $A_\lambda$ be the operator defined by (\ref{eq1}), and $S_\lambda$ the exponentially stable and uniformly continuous semigroup generates by $-A_\lambda$.
\item [(\textbf{A3})] $f_i \in C^{1}(R, R)$, $\sum_{i\in Z}f_i(0)^2 <\infty$, and there exists a constant $D_f \geq 0$ such that
	\begin{eqnarray*}
	\left|f^\prime_i(\zeta)\right| \leq D_f,\quad \zeta \in R, \, i \in Z.
	\end{eqnarray*}
\item [(\textbf{A4})] $h_i \in C^2(R, R)$, $\sum_{i\in Z}h_i(0)^2 <\infty$, and there exist constants $D_h, \,M_h\geq 0$ such that
\begin{eqnarray*}
  \left|h^\prime_i(\zeta)\right| \leq D_h, \quad \left|h^{\prime\prime}_i(\zeta)\right|\leq M_h,\quad \zeta \in R, \, i \in Z.
\end{eqnarray*}
\end{itemize}

Let $u=(u_i)_{i\in Z}$ be an element of $\ell^2$. Then
(\textbf{A3}) allows us to define the operator
\begin{equation}\label{f}
  f:\ell^2\to \ell^2,\quad f(u):=(f_i(u_i))_{i\in Z}.
\end{equation}
Thanks to (\textbf{A4}) we can also define the operator $h(u)\in L(\ell^2)$ by
\begin{equation}\label{h}
  h(u)v=(h_i(u_i)v_i)_{i\in Z}\in \ell^2.
\end{equation}

That $f$ and $h$ are well--posed is proved in the next result, as well as their main regularity properties.\\

\begin{lemma}\label{l2}
a) The operator  $f:\ell^2\to \ell^2$ given by (\ref{f}) is well--defined and is Lipschitz continuous with Lipschitz constant $D_f$.

b) The operator  $\ell^2\ni u\mapsto h(u)\in L_2(\ell^2)$ given by (\ref{h}) is well--defined and continuously differentiable. Moreover, both $h$ and its first derivative $Dh$ are Lipschitz--continuous with Lipschitz constants $D_h$ and $M_h$, respectively. Furthermore, for $u,\,v,\,w,\,z \in \ell^2$ the following property holds true:
\begin{eqnarray}
 \| h(u)-h(v) & -& (h(w)-h(z))\|_{L_2(\ell^2)}\le \sqrt{2} D_h\|u-v-(w-z)\|\nonumber \\[-1.5ex]
\label{MN}\\[-1.5ex]
 &+ &2M_h\|u-w\|(\|u-v\|+\|w-z\|).\nonumber
\end{eqnarray}
\end{lemma}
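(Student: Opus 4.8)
The plan is to reduce all four assertions to the scalar hypotheses in \textbf{(A3)}--\textbf{(A4)}, exploiting that $f$ and $h$ act diagonally: $f(u)=(f_i(u_i))_{i\in Z}$ and $h(u)\ee_i=h_i(u_i)\ee_i$, so that $\|h(u)\|_{L_2(\ell^2)}^2=\sum_{i\in Z}h_i(u_i)^2$. For part a), both claims follow from the mean value theorem applied coordinatewise. Writing $f_i(u_i)=f_i(0)+f_i^\prime(\xi_i)u_i$ and using $|f_i^\prime|\le D_f$ gives $f_i(u_i)^2\le 2f_i(0)^2+2D_f^2u_i^2$, so that $\sum_{i\in Z}f_i(u_i)^2\le 2\sum_{i\in Z}f_i(0)^2+2D_f^2\|u\|^2<\infty$ by \textbf{(A3)}; hence $f(u)\in\ell^2$. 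Likewise $|f_i(u_i)-f_i(v_i)|\le D_f|u_i-v_i|$ yields $\|f(u)-f(v)\|\le D_f\|u-v\|$ after summing squares.

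For part b), the identical argument with \textbf{(A4)} shows $\sum_{i\in Z}h_i(u_i)^2<\infty$, i.e. $h(u)\in L_2(\ell^2)$, and $\|h(u)-h(v)\|_{L_2(\ell^2)}\le D_h\|u-v\|$. I would take as candidate Fr\'echet derivative the operator $Dh(u)\in L(\ell^2,L_2(\ell^2))$ sending $w$ to the diagonal operator with entries $(h_i^\prime(u_i)w_i)_{i\in Z}$; from $\sum_{i\in Z}(h_i^\prime(u_i)w_i)^2\le D_h^2\|w\|^2$ it indeed lands in $L_2(\ell^2)$ with norm at most $D_h$. To check it is the derivative I would use the second order Taylor expansion $h_i(u_i+w_i)-h_i(u_i)-h_i^\prime(u_i)w_i=\frac{1}{2}h_i^{\prime\prime}(\eta_i)w_i^2$; together with $|h_i^{\prime\prime}|\le M_h$ and $\sum_{i\in Z}w_i^4\le\|w\|^4$ this gives
$$\|h(u+w)-h(u)-Dh(u)w\|_{L_2(\ell^2)}\le\frac{1}{2}M_h\|w\|^2=o(\|w\|).$$

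The Lipschitz bound for $Dh$ comes from the mean value theorem applied to $h_i^\prime$: since $|h_i^\prime(u_i)-h_i^\prime(v_i)|\le M_h|u_i-v_i|$, one gets
$$\|(Dh(u)-Dh(v))w\|_{L_2(\ell^2)}^2=\sum_{i\in Z}(h_i^\prime(u_i)-h_i^\prime(v_i))^2w_i^2\le M_h^2\Big(\sup_{i\in Z}|u_i-v_i|^2\Big)\|w\|^2\le M_h^2\|u-v\|^2\|w\|^2,$$
where I used $\sup_{i\in Z}|u_i-v_i|\le\|u-v\|$. Hence $\|Dh(u)-Dh(v)\|_{L(\ell^2,L_2(\ell^2))}\le M_h\|u-v\|$, which in particular makes $Dh$ continuous, so $h\in C^1$.

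The only genuinely non-routine point is the estimate (\ref{MN}), and the hard part is arranging the decomposition so that the second order remainder carries the factor $\|u-w\|$. The plan is to work on the coordinate $g_i:=h_i(u_i)-h_i(v_i)-(h_i(w_i)-h_i(z_i))$, regroup it as $[h_i(u_i)-h_i(w_i)]-[h_i(v_i)-h_i(z_i)]$ and apply the fundamental theorem of calculus to write $h_i(u_i)-h_i(w_i)=R_i(u_i-w_i)$ and $h_i(v_i)-h_i(z_i)=S_i(v_i-z_i)$, with $R_i=\int_0^1 h_i^\prime(w_i+s(u_i-w_i))\,ds$ and $S_i=\int_0^1 h_i^\prime(z_i+s(v_i-z_i))\,ds$. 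Setting $\delta_i:=(u_i-v_i)-(w_i-z_i)$ and using the identity $v_i-z_i=(u_i-w_i)-\delta_i$, this becomes $g_i=S_i\delta_i+(R_i-S_i)(u_i-w_i)$. The first term is bounded by $D_h|\delta_i|$ because $|S_i|\le D_h$; for the second, $|R_i-S_i|\le M_h\int_0^1|(w_i-z_i)+s\delta_i|\,ds\le M_h(|w_i-z_i|+\frac{1}{2}|\delta_i|)$ by $|h_i^{\prime\prime}|\le M_h$, and since $|\delta_i|\le|u_i-v_i|+|w_i-z_i|$ one arrives at
$$|g_i|\le D_h|\delta_i|+c\,M_h|u_i-w_i|\big(|u_i-v_i|+|w_i-z_i|\big)$$
for an absolute constant $c$. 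Summing the squares, applying the triangle (Minkowski) inequality in $\ell^2$ and bounding $\sup_{i\in Z}|u_i-w_i|\le\|u-w\|$ produces precisely the two terms $\|u-v-(w-z)\|$ and $\|u-w\|(\|u-v\|+\|w-z\|)$; tracking the constants shows they are no larger than $\sqrt{2}D_h$ and $2M_h$, which is (\ref{MN}). I expect the delicate bookkeeping in isolating the product structure $\|u-w\|(\|u-v\|+\|w-z\|)$ --- in particular the choice to factor out $u_i-w_i$ rather than $u_i-v_i$ --- to be the main obstacle.
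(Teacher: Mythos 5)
Your proof is correct and follows essentially the same coordinatewise reduction as the paper: the well-posedness and Lipschitz bounds for $f$ and $h$, the diagonal candidate for $Dh(u)$, the second-order Taylor remainder estimate $\frac14 M_h^2\|v\|^4$, and the Lipschitz bound for $Dh$ via $\sup_i|u_i-v_i|\le\|u-v\|$ all match the paper's argument. The only divergence is at (\ref{MN}), where the paper simply cites Lemma 7.1 of Nualart and Rascanu for the scalar inequality $|h_i(u_i)-h_i(v_i)-(h_i(w_i)-h_i(z_i))|^2\le 2D_h^2|\delta_i|^2+4M_h^2|u_i-w_i|^2\left(|u_i-v_i|^2+|w_i-z_i|^2\right)$ and then sums, whereas you derive the pointwise bound from scratch via the decomposition $g_i=S_i\delta_i+(R_i-S_i)(u_i-w_i)$ --- a correct, self-contained alternative that in fact yields constants at least as good as those claimed.
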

\begin{proof}

a) Thanks to the definition of $f$, for $u=(u_i)_{i\in Z}\in \ell^2$ we have
\begin{eqnarray*}
  \|f(u)\|^2\le 2\sum_{i\in Z}f_i(0)^2+2D_f^2\|u\|^2<\infty,
\end{eqnarray*}
hence it is well-posed. Furthermore, $f$ is Lipschitz--continuous:  for $v=(v_i)_{i\in Z}\in \ell^2$ we obtain
\begin{eqnarray*}
  \|f(u)-f(v)\|^2=\sum_{i\in Z}|f_i(u_i)-f_i(v_i)|^2\le D_f^2\sum_{i\in Z}|u_i-v_i|^2=D_f^2\|u-v\|^2.
\end{eqnarray*}

b) The operator $h$ is well-defined as a Hilbert--Schmidt--operator, since
\begin{eqnarray*}
\|h(u)\|^2_{L_2(\ell^2)}&=\sum_{i\in Z} \|h(u)\ee_i\|^2=\sum_{i,j\in Z} |(h(u)\ee_i)_j|^2=\sum_{i\in Z} |h_i(u_i)|^2\\
&\le 2\sum_{i\in Z}h_i(0)^2+2D_h^2\|u\|^2<\infty.
\end{eqnarray*}
Moreover, in a similar way as we have proceed for the operator $f$, $h$ is Lipschitz-continuous:
\begin{eqnarray*}
\|h(u)-h(v)\|^2_{L_2(\ell^2)}\leq D_h^2\|u-v\|^2, \quad  {\rm for }\,  u,\,v \in \ell^2.
\end{eqnarray*}
Regarding the derivative, we have that $Dh:\ell^2 \mapsto L(\ell^2,L_2(\ell^2))$ is defined for $u,\,v,\,w\in \ell^2$  by
\begin{eqnarray*}
&(Dh(u)v)w=(h_i^\prime(u_i)v_iw_i)_{i\in Z}.
\end{eqnarray*}
In fact,
\begin{eqnarray*}
  \|h(u+v)&-h(u)-Dh(u)v\|_{L_2(\ell^2)}^2=\sum_{i\in Z}|h_i(u_i+v_i)-h_i(u_i)-h_i^\prime(u_i)v_i|^2\\
  \le& \frac14
  \sum_{i\in Z}|h_i^{\prime\prime}(\tilde u_i)|^2v_i^4\le \frac14 M_h^2\sum_{i\in Z}v_i^4
  \le  \frac14 M_h^2(\sum_{i\in Z}v_i^2)^2 \le  \frac14 M_h^2\|v\|^4,
\end{eqnarray*}
where $\tilde u_i$ is an intermediate element between $u_i$ and $u_i+v_i$.
This derivative is bounded in the space $L(\ell^2,L_2(\ell^2))$ since
\begin{eqnarray*}
  \|Dh(u)\|_{L(\ell^2,L_2(\ell^2))}^2=\sup_{\|z\|=1} \sum_{i\in Z} |h_i^\prime(u_i)z_i|^2\leq D_h^2,
  \end{eqnarray*}
and furthermore $Dh$ is Lipschitz--continuous:
\begin{eqnarray*}
  \|Dh(u) &-Dh(v)\|_{L(\ell^2,L_2(\ell^2))}^2=\sup_{\|z\|=1} \sum_{i\in Z}|h_i^\prime(u_i)z_i-h_i^\prime(v_i)z_i|^2\\
  &\leq \sup_{\|z\|=1} \sum_{i\in Z}|h_i^{\prime \prime}(\tilde u_i)(u_i-v_i)z_i|^2 \leq M_h^2\|u-v\|^2.
\end{eqnarray*}
Finally, property (\ref{MN}) follows by Nualart and Rascanu \cite{NuaRas02} Lemma 7.1. Indeed, in virtue of the Lispchitz continuity of any $h_i$ and $h_i^\prime$ we obtain
\begin{eqnarray*}
 \| & h(u)-h(v) -(h(w)-h(z))\|^2_{L_2(\ell^2)}=\sum_{i\in Z}|h_i(u_i)-h_i(v_i)-(h_i(w_i)-h_i(z_i))|^2\\
 &\leq \sum_{i\in Z}(2D_h^2|u_i-v_i-(w_i-z_i)|^2+4M_h^2|u_i-w_i|^2(|u_i-v_i|^2+|w_i-z_i|^2))\\
 &\leq 2 D_h^2\|u-v-(w-z)\|^2+ 4M_h^2\|u-w\|^2(\|u-v\|^2+\|w-z\|^2).
\end{eqnarray*}
\end{proof}

Hence, we can reformulate the system of equations given by (\ref{eq0}) as the following evolution equation with values in $\ell^2$:
$$du(t)=(-A_\lambda u(t)+f(u(t))) dt+h(u(t))d \omega(t),$$
where $A_\lambda$ has been defined by (\ref{eq1}), and $f$ and $h$ by (\ref{f}) and (\ref{h}), respectively. The sequence $u(t)=(u_i(t))_{i\in Z}$ is such that $u_i$ fulfills (\ref{eq0}) for each $i\in Z$.
Since our stability considerations will be based on the exponential stability of $S_\lambda$, we look for a mild solution of the above equation, namely, we look for $u(t)=(u_i(t))_{i\in Z} \in \ell^2$ solution of the operator equation
\begin{equation}\label{eq2}
  u(t)=S_\lambda(t)x+\int_0^tS_\lambda(t-r)f(u(r))dr+\int_0^tS_\lambda(t-r)h(u(r))d\omega(r),
\end{equation}
where the initial condition $x\in \ell^2$. The last integral has to be interpreted as we have explained in Section \ref{s1}.\\

Next we would like to apply a fixed point argument to ensure the existence and uniqueness of a solution to (\ref{eq2}). We first present estimates of the stochastic integral appearing on the right hand side of (\ref{eq2}).\\
\begin{lemma} Under assumptions {\bf(A1)}, {\bf(A2)} and {\bf(A4)}, the stochastic integral satisfies
\begin{equation}\label{eh1}
\bigg\| \int_0^\cdot S_\lambda(\cdot-r)h(u(r))d\omega(r) \bigg\|_{\beta,\rho, 0,T} \le ck(\rho)\ltn\omega\rtn_{\beta^\prime,0,T}
\|h(u(\cdot))\|_{\beta,\rho,0,T},
\end{equation}
where $c$ may depend on $\beta$, $\beta^\prime$, $T$, $\|A_\lambda\|$, and $k(\rho)$ is given by (\ref{rho}). Furthermore,
\begin{equation}\label{eh2}
\bigg\| \int_0^\cdot S_\lambda(\cdot-r)h(u(r))d\omega(r) \bigg\|_{\infty, 0,T} \le c(1+\|A_\lambda\|)\ltn\omega\rtn_{\beta^\prime,0,T}
\|h(u(\cdot))\|_{\beta,0,T},
\end{equation}
\begin{equation}\label{eh3}
\ltn \int_0^\cdot S_\lambda(\cdot-r)h(u(r))d\omega(r) \rtn_{\beta, 0,T} \le c(1+\|A_\lambda\|)^2\ltn\omega\rtn_{\beta^\prime,0,T}
\|h(u(\cdot))\|_{\beta,0,T},
\end{equation}
where in the last two inequalities $c$ may depend on $\beta$, $\beta^\prime$ and $T$.
\end{lemma}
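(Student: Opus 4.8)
The plan is to reduce all three inequalities to the integral bounds already available: Lemma~\ref{l1b} for the $\rho$-weighted estimate (\ref{eh1}), and Lemma~\ref{l1}/Theorem~\ref{t1} for the two estimates (\ref{eh2}), (\ref{eh3}) with $\rho=0$, after controlling the $C^\beta$-norm of the operator-valued integrand $r\mapsto S_\lambda(\cdot-r)h(u(r))$. Write $I(t):=\int_0^t S_\lambda(t-r)h(u(r))\,d\omega(r)$. The key preliminary observation is a submultiplicativity of H\"older norms: if $G\in C^\beta$ takes values in $L(\ell^2)$ and $H\in C^\beta$ takes values in $L_2(\ell^2)$, then $\|GH\|_{\beta,\rho,0,T}\le c(\|G\|_{\infty,0,T}+\ltn G\rtn_{\beta,0,T})\|H\|_{\beta,\rho,0,T}$; the $\rho$-weight is harmless because in the seminorm of the product one bounds $e^{-\rho r_2}\|H(r_1)\|\le e^{-\rho r_1}\|H(r_1)\|$ for $r_1<r_2$. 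Combining this with Lemma~\ref{l2} and the semigroup bounds (\ref{est}), (\ref{eq3}), which give $\|S_\lambda(t-\cdot)\|_{\infty,0,t}\le 1$ and $\ltn S_\lambda(t-\cdot)\rtn_{\beta,0,t}\le\|A_\lambda\|t^{1-\beta}$, yields for $Z(r)=S_\lambda(t-r)h(u(r))$ the bound $\|Z\|_{\beta,\rho,0,T}\le c(1+\|A_\lambda\|)\|h(u(\cdot))\|_{\beta,\rho,0,T}$.

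For the supremum part of (\ref{eh1}) I would apply Lemma~\ref{l1b} to $\int_0^t Z\,d\omega$ with $s=0$; since the base point of the $\rho$-weighted norms is then $0$, no reconciliation is needed and the product bound above immediately gives $\|I\|_{\infty,\rho,0,T}\le ck(\rho)\ltn\omega\rtn_{\beta^\prime,0,T}\|h(u(\cdot))\|_{\beta,\rho,0,T}$, the factor $(1+\|A_\lambda\|)T^\beta$ being absorbed into $c$ (which is allowed to depend on $\|A_\lambda\|$ in (\ref{eh1})). Estimate (\ref{eh2}) is the analogue with $\rho=0$, using the plain bound of Lemma~\ref{l1} (via Theorem~\ref{t1}) in place of Lemma~\ref{l1b}; this produces the single power $(1+\|A_\lambda\|)$.

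The seminorm estimates are the substantive part, and here the $t$-dependence of the integrand forces the decomposition
$$I(t)-I(s)=\underbrace{\int_s^t S_\lambda(t-r)h(u(r))\,d\omega(r)}_{J_1}+\underbrace{\int_0^s\big(S_\lambda(t-r)-S_\lambda(s-r)\big)h(u(r))\,d\omega(r)}_{J_2}.$$
For $J_1$ I would apply the integral bound over $[s,t]$ to the integrand $S_\lambda(t-r)h(u(r))$, which after division by $(t-s)^\beta$ contributes the power $(1+\|A_\lambda\|)$. For $J_2$ the operator factor is $D(r)=S_\lambda(t-r)-S_\lambda(s-r)=(S_\lambda(t-s)-{\rm id})S_\lambda(s-r)$, for which (\ref{est}) and (\ref{semi}) give $\|D\|_{\infty,0,0,s}\le\|A_\lambda\|(t-s)$ and, by (\ref{sem1}), $\ltn D\rtn_{\beta,0,s}\le\|A_\lambda\|^2(t-s)s^{1-\beta}$, hence $\|D\|_{\beta,0,0,s}\le c(1+\|A_\lambda\|)^2(t-s)$. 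The product estimate then gives $\|D(\cdot)h(u(\cdot))\|_{\beta,\rho,0,s}\le c(1+\|A_\lambda\|)^2(t-s)\|h(u(\cdot))\|_{\beta,\rho,0,T}$, and the integral bound over $[0,s]$ followed by division by $(t-s)^\beta$ contributes the power $(1+\|A_\lambda\|)^2$, the residual factor $(t-s)^{1-\beta}s^\beta$ being bounded by $T$. Summing $J_1$ and $J_2$ gives (\ref{eh3}) (with $\rho=0$, $k(0)$ a constant) and, retaining the $\rho$-weight, the seminorm part of (\ref{eh1}).

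I expect the main obstacle to be a bookkeeping point rather than a conceptual one, located in the $J_1$ term of the $\rho$-weighted estimate (\ref{eh1}). A direct black-box use of Lemma~\ref{l1b} on $[s,t]$ produces the norm $\|h(u)\|_{\beta,\rho,s,t}$, whose base point is $s$, and this \emph{cannot} be dominated by the global $\|h(u)\|_{\beta,\rho,0,T}$ without an $e^{\rho s}$ factor that would destroy the estimate. The remedy is to observe that the proof of Lemma~\ref{l1b} only ever uses $e^{-\rho r}\|Z(r)\|$, obtained by splitting $e^{-\rho t}=e^{-\rho(t-r)}e^{-\rho r}$; the very same computation therefore yields $e^{-\rho t}\|\int_s^t Z\,d\omega\|\le ck(\rho)\|Z\|_{\beta,\rho,0,t}\ltn\omega\rtn_{\beta^\prime,s,t}(t-s)^\beta$ with base point $0$ on the right-hand side (valid since $s\ge 0$ gives $e^{-\rho s}\le 1$). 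Because the specific integrand satisfies $\|S_\lambda(t-r)h(u(r))\|\le\|h(u(r))\|$, these base-$0$ norms feed directly into the global $\|h(u(\cdot))\|_{\beta,\rho,0,T}$. This is the one place where the structure of the integrand, namely $\|S_\lambda\|_{L(\ell^2)}\le 1$, must be used in place of a generic product inequality.
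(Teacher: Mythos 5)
Your proposal is correct and follows essentially the same route as the paper: the same splitting of $I(t)-I(s)$ into $\int_s^t$ and $\int_0^s$ parts, the same product rule for H\"older norms combined with the semigroup bounds (\ref{est}), (\ref{semi}), (\ref{eq3}), (\ref{sem1}), and the same application of the weighted integral estimate (\ref{estb}). The base-point subtlety you flag in the $\rho$-weighted norm is real, and your resolution (tracking $e^{-\rho r}\|Z(r)\|$ through the proof of Lemma \ref{l1b}) is exactly what the paper does implicitly when it writes $\|S_\lambda(t-\cdot)h(u(\cdot))\|_{\beta,\rho,0,t}$ rather than a norm based at $s$.
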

\begin{proof} Thanks to the additivity of the stochastic integral we can consider the following splitting
\begin{eqnarray}
 & & \int_0^t S_\lambda(t-r)h(u(r))d\omega(r)-\int_0^s S_\lambda(s-r)h(u(r))d\omega(r) \nonumber \\[-1.5ex]
\label{eq22}\\[-1.5ex]
  &=&\int_s^t S_\lambda(t-r)h(u(r))d\omega(r)+\int_0^s (S_\lambda(t-r)-S_\lambda(s-r)) h(u(r))d\omega(r).\nonumber
\end{eqnarray}
From (\ref{estb}), for $0\leq s<t\leq T$ we obtain
\begin{eqnarray*}
  &e^{-\rho t}\frac{\bigg\|\int_s^t S_\lambda(t-r)h(u(r))d\omega(r)\bigg\|}{(t-s)^\beta}\le ck(\rho)\ltn\omega\rtn_{\beta^\prime,0,T}
  \|S_\lambda(t-\cdot)h(u(\cdot))\|_{\beta,\rho,0,t}\\
  &
  e^{-\rho t}\frac{\bigg\| \int_0^s (S_\lambda(t-r)-S_\lambda(s-r)) h(u(r))d\omega(r)\bigg\|}{(t-s)^\beta}\le
  ck(\rho)\ltn\omega\rtn_{\beta^\prime,0,T}\frac{s^\beta}{(t-s)^\beta}\\
  &\qquad \qquad \qquad \qquad \qquad \qquad \qquad \qquad \times{\|(S_\lambda(t-\cdot)-S_\lambda(s-\cdot))h(u(\cdot))\|_{\beta,\rho,0,s}}.
\end{eqnarray*}

Furthermore, since for two any $\beta$--H{\"o}lder--continuous functions $l,\,g$ we easily obtain
\begin{equation}\label{pr}
  \|lg\|_{\beta,\rho,0,t}\le \|l\|_{\infty,0,t}\|g\|_{\beta,\rho,0,t}+\|g\|_{\infty,\rho,0,t}\ltn l\rtn_{\beta,0,t},
\end{equation}
by (\ref{est}) and (\ref{eq3}) we derive
\begin{eqnarray*}
  \|S_\lambda(t-\cdot)h(u(\cdot))\|_{\beta,\rho,0,t}&\le& \|S_\lambda(t-\cdot)\|_{\infty,0,t}\|h(u(\cdot))\|_{\beta,\rho,0,t}+\ltn S_\lambda(t-\cdot)\rtn_{\beta,0,t}\|h(u(\cdot))\|_{\infty,\rho,0,t}\\
  &\le&\|h(u(\cdot))\|_{\beta,\rho,0,t}+\|A_\lambda\|t^{1-\beta}\|h(u(\cdot))\|_{\infty,\rho,0,t}
  \end{eqnarray*}
  and by (\ref{semi}) and (\ref{sem1})
  \begin{eqnarray*}
   & &\|(S_\lambda(t-\cdot)-S_\lambda(s-\cdot))h(u(\cdot))\|_{\beta,\rho,0,s}\\
   & \le &
 (t-s) \|A_\lambda\|\|h(u(\cdot))\|_{\beta,\rho,0,s}+\|A_\lambda\|^2 (t-s) s^{1-\beta}\|h(u(\cdot))\|_{\infty,\rho,0,s}.
\end{eqnarray*}
Hence
\begin{eqnarray*}
\ltn \int_0^\cdot S_\lambda(\cdot-r)h(u(r))d\omega(r) \rtn_{\beta,\rho, 0,T} \le ck(\rho)\ltn\omega\rtn_{\beta^\prime,0,T}
\|h(u(\cdot))\|_{\beta,\rho,0,T}.
\end{eqnarray*}
Taking into account the way in which we have estimated the first term on the right hand side of (\ref{eq22}), we immediately obtain
\begin{eqnarray*}
 \bigg\|\int_0^\cdot S_\lambda(\cdot-r)h(u(r))d\omega(r)\bigg\|_{\infty,\rho, 0,T}\le ck(\rho)\ltn\omega\rtn_{\beta^\prime,0,T}
\|h(u(\cdot))\|_{\beta,\rho,0,T},
\end{eqnarray*}
so the proof of (\ref{eh1}) is complete.

Notice that using the $\beta$--norm, (\ref{pr}) reads as follows
\begin{eqnarray*}
  \|lg\|_{\beta,0,t}\le \|l\|_{\infty,0,t}\|g\|_{\beta,0,t}+\|g\|_{\infty,0,t}\ltn l\rtn_{\beta,0,t},
\end{eqnarray*}
hence (\ref{eh2}) is an immediate consequence of (\ref{est}) and (\ref{eq3}). In order to prove (\ref{eh3}) we can follow the same steps as at the beginning of this proof.
\end{proof}

\medskip

Now we can establish the existence of a unique mild solution to our SLDS.\\

\begin{theorem}\label{t2}
Under assumptions {\bf(A1)}--{\bf(A4)}, for every $T>0$ and $x\in \ell^2$ the problem
(\ref{eq2}) has a unique solution $u(\cdot)=u(\cdot,\omega, x) \in C^\beta([0,T];\ell^2)$.
\end{theorem}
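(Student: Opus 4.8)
The plan is to realise the mild solution as the fixed point of the operator
$$
\mathcal{T}(u)(t)=S_\lambda(t)x+\int_0^tS_\lambda(t-r)f(u(r))\,dr+\int_0^tS_\lambda(t-r)h(u(r))\,d\omega(r)
$$
on the Banach space $C^\beta([0,T];\ell^2)$ equipped with the weighted norm $\|\cdot\|_{\beta,\rho,0,T}$, for a parameter $\rho>0$ to be fixed later. Since these norms are equivalent for different values of $\rho$, the space is complete for every $\rho$, so the Banach fixed point theorem will apply once $\mathcal{T}$ is shown to be a contraction on a suitable closed ball. The whole argument rests on the decay $k(\rho)\to 0$ as $\rho\to\infty$ from Lemma \ref{l1b}, together with the analogous decay of the deterministic convolution $\int_0^te^{-\rho(t-r)}(\cdots)\,dr$, which lets us absorb all the (possibly large) constants by taking $\rho$ sufficiently large.

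First I would check that $\mathcal{T}$ maps $C^\beta$ into itself and leaves invariant a ball of suitable radius. The term $S_\lambda(\cdot)x$ lies in $C^\beta$ with $\|S_\lambda(\cdot)x\|_{\beta,\rho,0,T}\le c(1+\|A_\lambda\|T^{1-\beta})\|x\|=:C_0$ by (\ref{est}) and (\ref{semi}), a bound independent of $\rho$. For the Lebesgue integral I would use that $f$ is Lipschitz (Lemma \ref{l2}a), so $\|f(u(\cdot))\|_{\beta,\rho,0,T}\le c(1+\|u\|_{\beta,\rho,0,T})$, and a convolution estimate with the weight $e^{-\rho(t-r)}$ producing a factor $k(\rho)$. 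For the stochastic integral I would invoke (\ref{eh1}) and bound $\|h(u(\cdot))\|_{\beta,\rho,0,T}\le c(1+\|u\|_{\beta,\rho,0,T})$ using that $h$ is Lipschitz (Lemma \ref{l2}b) and $\sum_ih_i(0)^2<\infty$. Collecting these yields
$$
\|\mathcal{T}(u)\|_{\beta,\rho,0,T}\le C_0+c\,k(\rho)\big(1+\ltn\omega\rtn_{\beta^\prime,0,T}\big)\big(1+\|u\|_{\beta,\rho,0,T}\big).
$$
Choosing $M:=2C_0$ and then $\rho$ large enough that $c\,k(\rho)(1+\ltn\omega\rtn_{\beta^\prime,0,T})(1+M)\le C_0$ renders the closed ball of radius $M$ invariant.

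Next I would prove the contraction property on this ball. The initial-value term cancels in $\mathcal{T}(u)-\mathcal{T}(v)$, and the deterministic part is controlled by $D_f$ and a factor $k(\rho)$ as above. For the stochastic part, (\ref{eh1}) reduces everything to estimating $\|h(u(\cdot))-h(v(\cdot))\|_{\beta,\rho,0,T}$, and here property (\ref{MN}) of Lemma \ref{l2} is the essential tool: applied with the four arguments $u(t),v(t),u(s),v(s)$ it gives
$$
\ltn h(u)-h(v)\rtn_{\beta,\rho,0,T}\le\sqrt{2}\,D_h\ltn u-v\rtn_{\beta,\rho,0,T}+c\,M_h\ltn u\rtn_{\beta,0,T}\,\|u-v\|_{\infty,\rho,0,T},
$$
where I keep the seminorm $\ltn u\rtn_{\beta,0,T}$ \emph{unweighted} so that the two exponential weights in the quadratic term cancel correctly. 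On the ball this seminorm is at most $M$, hence $\|h(u)-h(v)\|_{\beta,\rho,0,T}\le c(D_h+M_hM)\|u-v\|_{\beta,\rho,0,T}$, and therefore $\|\mathcal{T}(u)-\mathcal{T}(v)\|_{\beta,\rho,0,T}\le c\,k(\rho)(1+\ltn\omega\rtn_{\beta^\prime,0,T})(1+M)\|u-v\|_{\beta,\rho,0,T}$. Increasing $\rho$ once more makes this constant strictly less than $1$, and the Banach fixed point theorem provides a unique fixed point $u\in C^\beta([0,T];\ell^2)$ solving (\ref{eq2}).

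Finally, for uniqueness in the whole space (not merely within the chosen ball) I would observe that any two solutions $u,v\in C^\beta([0,T];\ell^2)$ have finite seminorms, so setting $M^\prime:=\max\{\ltn u\rtn_{\beta,0,T},\ltn v\rtn_{\beta,0,T}\}$ and repeating the difference estimate with $M$ replaced by $M^\prime$, the same choice of a large $\rho$ forces $\|u-v\|_{\beta,\rho,0,T}=0$, whence $u=v$. The step I expect to be the most delicate is precisely the treatment of the quadratic term coming from (\ref{MN}): one must distribute the exponential weights so that the pieces that are unbounded in $\rho$ cancel and only a seminorm bounded on the ball survives, leaving the decaying factor $k(\rho)$ free to drive the contraction.
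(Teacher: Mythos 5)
Your proposal is correct and follows essentially the same route as the paper: a Banach fixed point argument in $C^\beta([0,T];\ell^2)$ with the $\rho$-weighted norm, an invariant ball obtained by letting $k(\rho)\to 0$, and a contraction estimate driven by property (\ref{MN}), with the quadratic term controlled by keeping one factor in the unweighted seminorm exactly as the paper does via $\|u_i\|_{\infty,0,T}\le e^{\rho T}R(x,\rho)$. Your closing remark on uniqueness beyond the invariant ball is a point the paper leaves implicit, but it is handled the same way and does not constitute a different method.
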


\begin{proof}
We will show that the operator
\begin{eqnarray*}
  \tT_{x,\omega}(u)[t]=S_\lambda(t)x+\int_0^tS_\lambda(t-r)f(u(r))dr+\int_0^tS_\lambda(t-r)h(u(r))d\omega(r),
\end{eqnarray*}
where $t\in [0,T]$, has a unique fixed point in $C^\beta([0,T];\ell^2)$ by applying the Banach fixed point theorem. To this end, first of all we show that there
exists a closed centered ball with respect to the norm $\|\cdot\|_{\beta,\rho,0,T}$ which is mapped by $\tT_{x,\omega}$ into itself.
For the first term, in virtue of (\ref{semi}),
\begin{eqnarray*}
  \|S_\lambda(\cdot)x\|_{\beta,\rho,0,T}\le (1+\|A_\lambda\|T^{1-\beta})\|x\|.
\end{eqnarray*}
For the Lebesgue integral of  $\tT_{x,\omega}$ we obtain

\begin{eqnarray}
\bigg\|\int_0^\cdot S_\lambda(\cdot-r)f(u(r))dr\bigg\|_{\beta,\rho,0,T}&\le& \sup_{t\in[0,T]}e^{-\rho t}\bigg\|\int_0^tS_\lambda(t-r)
 f(u(r))dr\bigg\| \nonumber \\
 &+&
 \sup_{0\le s<t\le T}e^{-\rho t}\frac{\bigg\|\int_s^tS_\lambda(t-r)f(u(r))dr\bigg\|}{(t-s)^\beta} \nonumber\\[-1.5ex]
 \label{eq4}\\[-1.5ex]
 &+&
 \sup_{0\le s<t\le T}e^{-\rho t}\frac{\bigg\|\int_0^s(S_\lambda(t-r)-S_\lambda(s-r))f(u(r))dr\bigg\|}{(t-s)^\beta} \nonumber\\
 &\le&
\tilde k(\rho)\|f(u(\cdot))\|_{\infty,\rho,0,T},\nonumber
\end{eqnarray}
where $\lim_{\rho\to \infty} \tilde k(\rho)=0$. In fact, we are going to show that
\begin{eqnarray}\label{trho}
\tilde k(\rho)=\bigg( \frac{1}{\rho}+c_\beta\frac{1}{\rho^{1-\beta}}+\frac{1}{\rho}T^{1-\beta}\|A_\lambda\|\bigg),
\end{eqnarray}
where $c_\beta$ is a positive constant depending on $\beta$. Note that the first term on the right hand side of (\ref{eq4}) is estimated by
\begin{eqnarray*}
\sup_{t\in[0,T]}  \int_0^te^{-\rho(t-r)}dr\|f(u(\cdot))\|_{\infty,\rho,0,T}\le \frac{1}{\rho}\|f(u(\cdot))\|_{\infty,\rho,0,T}.
\end{eqnarray*}
For the second expression,
\begin{eqnarray*}
  \frac{\int_s^te^{-\rho(t-r)}dr}{(t-s)^\beta}\le \frac{1}{\rho^{1-\beta}}\frac{1-e^{-\rho(t-s)}}{\rho^\beta(t-s)^\beta}
  \le \frac{1}{\rho^{1-\beta}}\sup_{x> 0}\frac{1-e^{-x}}{x^\beta}=:\frac{1}{\rho^{1-\beta}}c_\beta.
\end{eqnarray*}
The estimate of the last term on the right hand side of (\ref{eq4}) follows by (\ref{semi}), since $S_\lambda(t-r)-S_\lambda(s-r)=(S_\lambda(t-s)-{\rm Id})S_\lambda(s-r)$. On the other hand,
\begin{eqnarray*}
  \|f(u(\cdot))\|_{\infty,\rho,0,T}&\le& \sup_{0\le t\le T}e^{-\rho t}\|f(x)\|+\sup_{0\le t\le T}e^{-\rho t}\|f(u(t))-f(x)\|\\
 & \le &\|f(x)\|+D_fT^\beta\|u\|_{\beta,\rho,0,T},
\end{eqnarray*}
hence
\begin{eqnarray*}
 \bigg\|&\int_0^\cdot S_\lambda(\cdot-r)f(u(r))dr\bigg\|_{\beta,\rho,0,T}\le \hat k(\rho) (1+\|u\|_{\beta,\rho,0,T}),
 \end{eqnarray*}
where now $\hat k(\rho)=\max\{\|f(x)\|,D_f T^\beta \}\tilde k(\rho)$, with $\tilde k(\rho)$ defined by (\ref{trho}).

\medskip

On the other hand,
\begin{eqnarray}
  \|h(u(\cdot))\|_{\beta,\rho,0,t}&=  & \sup_{r\in[0,t]}e^{-\rho r}\|h(u(r))\|_{L_2(\ell^2)}\nonumber \\
  &+&\sup_{0\le q<r\le t}\frac{e^{-\rho r}\|h(u(r))-h(u(q))\|_{L_2(\ell^2)}}{(r-q)^\beta} \label{eq6} \\
  &\le  & \|h(x)\|+D_h(1+T^\beta)\|u\|_{\beta,\rho,0,T},\nonumber
\end{eqnarray}
hence, on account of (\ref{eh1}) we obtain
\begin{eqnarray*}
\bigg\| \int_0^\cdot S_\lambda(\cdot-r)h(u(r))d\omega(r) \bigg\|_{\beta,\rho, 0,T} \le ck(\rho)\ltn\omega\rtn_{\beta^\prime,0,T}
 (1+\|u\|_{\beta,\rho,0,T}),
\end{eqnarray*}
where $c$ may depend on $\beta$, $\beta^\prime$, $T$, $\|A_\lambda\|$, $\|h(x)\|$ and $D_h$.
In conclusion, we have obtained
\begin{eqnarray*}
 \| \tT_{x,\omega}(u)\|_{\beta,\rho,0,T}\leq  (1+\|A_\lambda\|T^{1-\beta})\|x\|+K(\rho)(1+\ltn\omega\rtn_{\beta^\prime,0,T})(1+\|u\|_{\beta,\rho,0,T})
\end{eqnarray*}
where $\lim_{\rho \to \infty}K(\rho)=0$. Note that $K(\rho)$ may also depend on the parameters related to $f$ and $h$, the initial condition $x$, $\|A_\lambda\|$ and $T$. Taking a sufficiently large $\rho$ such that $K(\rho)(1+\ltn\omega\rtn_{\beta^\prime,0,T})\leq 1/2$, the ball
$$B=B(0,R(x,\rho))=\{u\in C^\beta([0,T];\ell^2)\,:\, \|u\|_{\beta,\rho,0,T}\leq R\}$$ with $$R=R(x,\rho)=2(1+\|A_\lambda\|T^{1-\beta})\|x\|+1,$$ is mapped into itself since
\begin{eqnarray*}
  \|\tT_{x,\omega}(u)\|_{\beta,\rho,0,T}\le (1+\|A_\lambda\|T^{1-\beta})\|x\|+\frac12(1+R) =R.\\
\end{eqnarray*}
%Hence the centered and closed ball $B$ with respect to the norm $\|\cdot\|_{\beta,\rho,0,T}$ wit radius $R(x,\omega,\rho)\ge 2C$ will be mapped into itself. Note that this set is also bounded with respect to $\|\cdot\|_{\beta,0,T}$ where this bound is denoted by $R(x,\omega)$.
We now derive the contraction condition for the operator $\tT_{x,\omega}$ with respect to the norm $\|\cdot\|_{\beta,\bar\rho,0,T}$
where the $\bar \rho$ may differ from the $\rho$ considered above. However, since all these norms are
equivalent for different $\rho\ge 0$, the set $B$ remains a complete space with respect to any $\|\cdot\|_{\beta,\bar\rho,0,T}$.

Similar to above, for the Lebesgue integral we obtain the estimate
\begin{eqnarray*}
 \|f(u_1(\cdot))-f(u_2(\cdot))\|_{\beta,\bar\rho,0,T}
  \le \tilde k(\bar \rho) D_f\|u_1-u_2\|_{\beta,\bar\rho,0,T},
\end{eqnarray*}
where $\tilde k(\rho)$ is defined by (\ref{trho}) replacing $\rho$ by $\bar \rho$.

Regarding the stochastic integral, the difference with respect to the previous computations is that now in (\ref{eq6}) the operator $h(u(\cdot))$
has to be replaced by $h(u_1(\cdot))-h(u_2(\cdot))$. In particular, from (\ref{MN}) we easily derive
\begin{eqnarray*}
  \|h(u_1(\cdot))-h(u_2(\cdot))\|_{\beta,\bar\rho,0,T}&\le &D_h(\|u_1-u_2\|_{\infty,\bar\rho,0,T}+\sqrt{2} \ltn u_1-u_2\rtn_{\beta,\bar\rho,0,T})\\
  &+&2M_h (\|u_1\|_{\infty,0,T}+\|u_2\|_{\infty,0,T})\|u_1-u_2\|_{\infty,\bar\rho,0,T}.
\end{eqnarray*}
Since $\|u_1\|_{\infty,0,T}\leq e^{\rho T} R(x,\rho)$ (and the same inequality holds for $u_2$), then
\begin{eqnarray*}
 \| \tT_{x,\omega}(u_1)-\tT_{x,\omega}(u_2)\|_{\beta,\bar\rho,0,T}&\leq  K(\bar \rho)(1+\ltn\omega\rtn_{\beta^\prime,0,T}) (1+\|u_1\|_{\infty,0,T}+\|u_2\|_{\infty,0,T})\\
 &\qquad \times \|u_1-u_2\|_{\infty,\bar\rho,0,T},
\end{eqnarray*}
where again $\lim_{\bar \rho \to 0} K(\bar \rho)=0$. It suffices then to choose $\bar\rho$ sufficiently large so that
\begin{eqnarray*}
 \| \tT_{x,\omega}(u_1)-\tT_{x,\omega}(u_2)\|_{\beta,\bar\rho,0,T}\le \frac12 \|u_1-u_2\|_{\beta,\bar\rho,0,T},
\end{eqnarray*}
which implies the contraction property of the map $T_{x,\omega}$. Hence, (\ref{eq2}) has a unique solution $u\in C^\beta([0,T];\ell^2)$.
\end{proof}

\medskip

We finish this section by proving that the solution of (\ref{eq2}) generates a random dynamical system.\\

\begin{definition}\label{d2_1}
Consider a probability space $(\Omega,\fF,P)$. The quadruple $(\Omega,\fF,P,\theta)$ is called a metric dynamical system if the measurable mapping
\begin{eqnarray*}
    \theta:(R\times\Omega, \mathcal{B}(R)\otimes\mathcal{F}) \to
    (\Omega,\mathcal{F})
\end{eqnarray*}
is a flow, that is,
$$\theta_{t_1}\circ\theta_{t_2}=\theta_{t_1}\theta_{t_2}=\theta_{t_1+t_2},\;  t_1,\,t_2\in R; \qquad \theta_0={\rm id}_\Omega$$
and the measure $P$ is invariant and ergodic with respect to $\theta$.\\
\end{definition}
\begin{definition}\label{d2_2}
A random dynamical system $\varphi$ over the metric dynamical system
$(\Omega,\fF,P,\theta)$ is a $\big(\bB(R^+)\otimes\fF\otimes
\bB(\ell^2),\bB(\ell^2)\big)$--measurable mapping such that the cocycle
property holds
\begin{eqnarray*}
  \varphi(t+\tau,\omega,x)=\varphi(t,\theta_\tau \omega,\varphi(\tau,\omega,x)),\quad \varphi(0,\omega,x)=x,
\end{eqnarray*}
for all $t\ge\tau\in R^+$, $x\in \ell^2$ and $\omega\in \Omega$.\\
\end{definition}

The metric dynamical system is the model for the noise, in our case the fBm. More precisely, we take the quadruple $(\Omega, \fF,P,\theta)=(C_0(R;\ell^2),\bB(C_0(R;\ell^2)),P_H,\theta)$ where $\theta$ is given by the Wiener flow introduced in (\ref{shift}).

\begin{theorem}\label{t3}
The solution of (\ref{eq2}) generates a random dynamical system
$$\varphi: R^+\times \Omega\ \times \ell^2 \mapsto \ell^2$$
given by $\varphi(t,\omega,x)=u(t,\omega,x)=u(t)$, where $u$ the unique solution to (\ref{eq2}) corresponding to $\omega$ and initial condition $x$.
\end{theorem}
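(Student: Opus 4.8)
The plan is to verify the two defining conditions of Definition~\ref{d2_2}. The normalization $\varphi(0,\omega,x)=x$ is immediate, since at $t=0$ both integrals in (\ref{eq2}) vanish and $S_\lambda(0)={\rm id}$. The cocycle identity and the joint measurability then require separate arguments; for the cocycle property the key idea is to combine the transformation formula (\ref{change}) with the uniqueness statement of Theorem~\ref{t2}, whereas for measurability one must track the fixed point as a measurable limit.

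For the cocycle property, fix $\tau\in R^+$, $x\in\ell^2$ and $\omega\in\Omega$, and set $v(t):=u(t+\tau,\omega,x)$ for $t\in R^+$. I would show that $v$ solves the mild equation (\ref{eq2}) driven by the shifted path $\theta_\tau\omega$ and started at $u(\tau,\omega,x)$. Writing (\ref{eq2}) at time $t+\tau$ and splitting each integral at $\tau$, I use the semigroup identity $S_\lambda(t+\tau-r)=S_\lambda(t)S_\lambda(\tau-r)$ for $0\le r\le\tau$, together with the boundedness and linearity of $S_\lambda(t)$, to factor
\begin{eqnarray*}
S_\lambda(t+\tau)x+\int_0^{\tau}S_\lambda(t+\tau-r)f(u(r))dr+\int_0^{\tau}S_\lambda(t+\tau-r)h(u(r))d\omega(r)=S_\lambda(t)\,u(\tau).
\end{eqnarray*}
In the remaining Lebesgue integral over $[\tau,t+\tau]$ the substitution $r\mapsto r+\tau$ gives $\int_0^t S_\lambda(t-r)f(v(r))dr$, while for the Young integral over $[\tau,t+\tau]$ the transformation formula (\ref{change}), valid in the $\ell^2$-valued setting by Theorem~\ref{t1}, yields
\begin{eqnarray*}
\int_\tau^{t+\tau}S_\lambda(t+\tau-r)h(u(r))d\omega(r)=\int_0^t S_\lambda(t-r)h(v(r))d\theta_\tau\omega(r).
\end{eqnarray*}
Collecting the terms shows that $v$ satisfies (\ref{eq2}) with noise $\theta_\tau\omega$ and initial condition $u(\tau,\omega,x)$. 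By the uniqueness part of Theorem~\ref{t2}, $v(t)=u(t,\theta_\tau\omega,u(\tau,\omega,x))$, which is precisely $\varphi(t+\tau,\omega,x)=\varphi(t,\theta_\tau\omega,\varphi(\tau,\omega,x))$.

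For the measurability of $\varphi$, I would establish joint measurability by separating continuity in the deterministic variables from measurability in the noise. Since $u(\cdot,\omega,x)\in C^\beta([0,T];\ell^2)$ it is continuous in $t$, and the contraction estimates in the proof of Theorem~\ref{t2} show that $x\mapsto u(\cdot,\omega,x)$ is Lipschitz, hence continuous, uniformly on bounded sets. For the dependence on $\omega$, I would realize the fixed point as the $C^\beta$-limit of the Picard iterates $u^{(0)}=S_\lambda(\cdot)x$, $u^{(n+1)}=\tT_{x,\omega}(u^{(n)})$: using the continuity of the Young integral with respect to its integrator from the last part of Lemma~\ref{l1}/Theorem~\ref{t1}, each iterate is measurable in $(\omega,x)$, and the measurability is inherited by the limit. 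Joint $\big(\bB(R^+)\otimes\fF\otimes\bB(\ell^2),\bB(\ell^2)\big)$-measurability then follows from a standard Carath\'eodory argument, $\varphi$ being continuous in $(t,x)$ for each fixed $\omega$ and measurable in $\omega$ for each fixed $(t,x)$.

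The main obstacle is precisely this $\omega$-measurability: unlike the cocycle property, which reduces cleanly to the transformation rule (\ref{change}) and the uniqueness in Theorem~\ref{t2}, controlling the fixed point as a measurable limit requires exploiting the continuous dependence of the Hilbert-valued Young integral on its integrator. Once this continuity is in hand, the passage to joint measurability is routine.
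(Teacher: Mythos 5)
Your proposal is correct and follows essentially the same route as the paper: the cocycle property is obtained by the identical splitting of the integrals at $\tau$, the semigroup factorization and the shift formula (\ref{change}), combined with uniqueness from Theorem~\ref{t2}, and joint measurability is reduced to continuity in $(t,x)$ plus measurability in $\omega$ via a Castaing--Valadier-type argument. The only cosmetic difference is that you obtain the $\omega$-measurability by passing to the limit along Picard iterates, whereas the paper invokes continuity of $\omega\mapsto u(\cdot,\omega,x)$ directly; both rest on the same continuity of the Young integral in its integrator from Lemma~\ref{l1}/Theorem~\ref{t1}.
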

\begin{proof}
We only sketch the main ideas of the proof.

The cocycle property is a consequence in particular of the additivity of the stochastic integral as well as the behavior of the stochastic integral when performing a change of variable given by (\ref{change}). More specifically,
\begin{eqnarray*}
\varphi (t+\tau, \omega,x) &= &
S_\lambda(t+\tau)x+\int_{0}^{t+\tau}S_\lambda(t+\tau-r)
f(u(r))dr\\
&\qquad +&\int_{0}^{t+\tau}S_\lambda(t+\tau-r)
h(u(r))d\omega(r) \\
&=&S_\lambda(t) \left ( S_\lambda(\tau)x+\int_{0}^{\tau}S_\lambda(\tau-r)f(u(r))dr+\int_{0}^{\tau}S_\lambda(\tau-r)h(u(r))d\omega(r) \right) \\
& \qquad +&\int_{\tau}^{t+\tau}S_\lambda(t+\tau-r)f(u(r))dr+\int_{\tau}^{t+\tau}S_\lambda(t+\tau-r)h(u(r))d\omega(r)\\
&=&S(t)u(\tau)+\int_{0}^{t}S_\lambda(t-r)f(u(r+\tau))dr+\int_{0}^{t}S_\lambda(t-r)h(u(r+\tau))d\theta_\tau\omega(r).
\end{eqnarray*}
Denoting $y(\cdot)=u(\cdot+\tau)$ the previous inequality reads
$$\varphi  (t+\tau, \omega,x)=S_\lambda(t)y(0)+\int_{0}^{t}S_\lambda(t-r)f(y(r))dr+\int_{0}^{t}S_\lambda(t-r)h(y(r))d\theta_\tau\omega(r)$$
and the right hand side of the last equality is equal to $\varphi(t,\theta_\tau \omega, \varphi(\tau,\omega,x))$.

The measurability of the mapping $\varphi$ follows due to its continuity with respect to $\omega$ (that implies measurability with respect to $\omega$), also due to its continuity with respect to $(t,x)$ and the separability of $\ell^2$, see Lemma
III.14 in Castaing and Valadier \cite{CasVal77}.

\end{proof}

\section{Exponential stability of the trivial solution}\label{s3}

The purpose of this section is to show that the trivial solution of (\ref{eq0}) is exponential stable. Therefore, we start assuming that zero is an equilibrium of the SLDS.\\

Since we work directly with our SLDS without transforming it into a random equation, the norm of the solution depends on how large is the norm of the noisy input, and therefore we will consider a cut--off strategy, in such a way that we will deal with a modified lattice system depending on a random variable. Further that random variable can be chosen in a suitable way such that it turns out that it is possible to apply a Gronwall--like lemma, that together with the temperedness of the involved random variables will imply that the solution of the modified system coincides with the one of the original lattice system, that converges to the trivial solution exponentially fast.\\

\begin{definition}\label{d1}
The trivial solution of the SLDS is said to be exponential stable with rate $\mu>0$ if for almost every $\omega$ there exists a random variable $\alpha(\omega)>0$ and a random neighborhood $U(\omega)$ of zero
such that for all $\omega \in \Omega$ and $t\in R^+$
\begin{eqnarray*}
\sup_{x\in U(\omega)} \|\varphi(t,\omega,x) \|\leq \alpha (\omega) e^{-\mu t},
\end{eqnarray*}
where $\varphi:R^+ \times \Omega \times \ell^2 \rightarrow \ell^2$ is the cocycle mapping given in Theorem \ref{t3}.\\
\end{definition}

For the study of exponential stability of systems driven by continuous semimartingales, see the monograph \cite{Mao}. In the spirit of working with the rich theory of RDS, here we have adapted the definition of exponential stability to the RDS setting.

We would like to prove that the trivial solution of the SLDS is exponentially stable with rate $\mu<\lambda$. In order to do that, first of all we need to introduce the key concept of temperedness. A random variable $R\in (0,\infty)$ is called tempered from above with respect to the metric dynamical system $(\Omega,\fF,P,\theta)$ if
\begin{eqnarray}\label{eq4b}
\limsup_{t\to \pm\infty}\frac{\log^+R(\theta_t\omega)}{t}=0\quad {\rm with \; probability \; 1}.
\end{eqnarray}
Therefore, temperedness from above describes the subexponential growth  of a stochastic stationary process
$(t,\omega)\mapsto R(\theta_t\omega)$.
$R$ is called tempered from below if $R^{-1}$ is tempered from above. In particular, if the random variable $R$ is tempered from below and
$t\mapsto R(\theta_t\omega)$ is continuous, then for any $\eps>0$ there exists a random variable $C_\eps(\omega)>0$ such that
\begin{eqnarray*}
  R(\theta_t\omega)\ge C_\eps(\omega)e^{-\eps|t|}\quad {\rm with \; probability \; 1}.
\end{eqnarray*}
A sufficient condition for temperedness with respect to an ergodic metric dynamical system  is that
\begin{eqnarray*}
  E\sup_{t\in [0,1]}\log^+R(\theta_t\omega)<\infty,
\end{eqnarray*}
see Arnold \cite{Arnold}, Page 165. Hence, by Kunita \cite{Kun90} Theorem 1.4.1 we obtain that $R(\omega)=\ltn\omega\rtn_{\beta^\prime,0,1}$ is tempered from above because $\log^+r\le r$ for $r> 0$ and trivially $\sup_{t\in [0,1]}\ltn\theta_t\omega\rtn_{\beta,0,1}\le \ltn\omega\rtn_{\beta,0,2}$. Furthermore, the set of all $\omega$ satisfying (\ref{eq4b}) is invariant with respect to the flow $\theta$.
\medskip

We now introduce two more assumptions, which in particular imply that (\ref{eq0}) has the unique trivial solution.\\

In what follows, for $\delta>0$ we also assume that
\begin{itemize}
\item [(\textbf{A3}')]
Each $f_i$ is defined on $[-\delta,\delta]$. In addition to the assumption (\textbf{A3}), we assume that $f_i(0)=f_i^\prime(0)=0$, $f_i\in C^2([-\delta,\delta]),R)$ and there exists a positive constant $M_f$ such that
\begin{eqnarray*}
|f^{\prime\prime}_i(\zeta)|\leq M_f,\quad \zeta\in [-\delta, \delta], \, i\in Z.
\end{eqnarray*}
\item [(\textbf{A4}')]
Let each $h_i$ be defined on $[-\delta,\delta]$.
In addition to the assumption (\textbf{A4}), we assume that $h_i(0)=h_i^\prime(0)=0$.
\end{itemize}
\medskip
The operators $f,\,h$ then are defined on $\bar B_{\ell^2}(0,\delta)$. In particular, from {\bf (A3')} we derive that $f$ is Fr\'echet differentiable and its derivative $Df:\ell^2 \mapsto L(\ell^2)$ is continuous. Indeed, for $u,\,v\in \ell^2$ we obtain
$$\|f(u+v)-f(u)-Df(u)v\|^2\leq \frac14 M_f^2\|v\|^4,$$
and
$$\|Df(u)-Df(v)\|^2_{L(\ell^2)} =\sup_{\|z\|=1}\|Df(u)z-Df(v)z\|^2\leq M_f^2\|u-v\|^2.$$

Furthermore, these assumptions ensure that (\ref{eq0}) has the unique trivial solution.

We introduce $\chi$ to be the cut--off function
\begin{eqnarray*}
  \chi:\ell^2\to \bar B_{\ell^2}(0,1),\quad\chi(u)=\left\{\begin{array}{lcr}
  u&:& \|u\|\le \frac12\\
  0&:& \|u\|\ge 1
  \end{array}
   \right.
\end{eqnarray*}
such that the norm of $\chi(u)$ is bounded by 1. We also assume that $\chi$ is twice continuously differentiable with bounded derivatives $D\chi$ and $D^2 \chi$. Bounds of these derivatives are denoted by $L_{D\chi},\,L_{D^2\chi}$. Now for $u\in \ell^2$ and some $0<\hat R\le \delta$ we define
$$\chi_{\hat R}(u)=\hat R\chi(u/\hat R)\in \bar B_{\ell^2}(0,\hat R).$$
Then it is easy to see that the first derivative $D\chi_{\hat R}$ of $\chi_{\hat R}$ is bounded by $L_{D\chi}$, while the second derivative $D^2\chi_{\hat R}$ is bounded by$\frac{L_{D^2\chi}}{\hat R}$.

We now modify the operators $f,\,h$ by considering their compositions with the above cut--off function. In that way, we set $f_{\hat R}:=f\circ \chi_{\hat R}:\ell^2 \to \ell^2$ and $h_{\hat R}:=h\circ \chi_{\hat R}:\ell^2 \to L_2(\ell^2)$, consider (\ref{eq2})
replacing $f$ by $f_{\hat R}$ and $h$ by $h_{\hat R}$, and the sequence $(u^n)_{n\in N}$ defined by
\begin{eqnarray}
u^n(t)&=&S_\lambda(t)u^n(0)+\int_0^t S_\lambda (t-r)f_{\hat R(\theta_n \omega)}(u^n(r))dr\nonumber \\[-1.5ex]
\label{eq2b}\\[-1.5ex]
&+&\int_0^t S_\lambda (t-r)h_{\hat R(\theta_n \omega)}(u^n(r))d\theta_n\omega,\qquad t\in [0,1],\nonumber
\end{eqnarray}
where $u^0(0)=x$ and $u^n(0)=u^{n-1}(1)$. Since the modified coefficients satisfy assumptions in Theorem \ref{t2} for any $n\in N$, then there exists a unique solution $u^n$ to (\ref{eq2b}) on $[0,1]$.

Next we establish a result which will be key in order to obtain the exponential stability of the trivial solution.\\

\begin{lemma}\label{l51}
For every $R>0$ there exists a positive $\hat R\le \delta$ such that for all $u, \, z\in \ell^2$
\begin{eqnarray}
  \|f_{\hat R}(u)\|&\le& R L_{D\chi}\|u\|,\label{eq13} \\
  \|h_{\hat R}(u)\|&\le& R L_{D\chi} \|u\|, \label{eq9} \\
   \|h_{\hat R}(u)-h_{\hat R}(z)\|&\le& R L_{D\chi}\|u-z\|.\label{eq23b}
\end{eqnarray}

\end{lemma}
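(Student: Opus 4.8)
The plan is to reduce all three bounds to the behaviour of $f_i$ and $h_i$ near the origin, exploiting the vanishing conditions $f_i(0)=f_i'(0)=0$ and $h_i(0)=h_i'(0)=0$ coming from \textbf{(A3')} and \textbf{(A4')}, together with two elementary facts about the scaled cut-off: that $\chi_{\hat R}$ maps $\ell^2$ into $\bar B_{\ell^2}(0,\hat R)$, and that, since $\chi_{\hat R}(0)=0$ and $D\chi_{\hat R}$ is bounded by $L_{D\chi}$, the mean value inequality gives $\|\chi_{\hat R}(u)\|\le L_{D\chi}\|u\|$ for every $u\in\ell^2$. The factor $R$ on the right-hand sides will be produced entirely by choosing $\hat R$ small, proportional to $R$; note that the cut-off bound $\|\chi_{\hat R}(u)\|\le\hat R$ handles the region $\|u\|\ge\hat R$ (where $\chi_{\hat R}(u)=0$) uniformly, so no case distinction is needed.

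For (\ref{eq13}) and (\ref{eq9}) I would first establish a quadratic pointwise estimate. Since $f_i(0)=f_i'(0)=0$ and $|f_i''|\le M_f$ on $[-\delta,\delta]$, Taylor's theorem with second-order remainder gives $|f_i(\zeta)|\le \frac{1}{2}M_f\zeta^2$, whence for any $w\in\bar B_{\ell^2}(0,\delta)$,
\begin{eqnarray*}
\|f(w)\|^2 & = & \sum_{i\in Z}|f_i(w_i)|^2 \le \frac{1}{4}M_f^2\sum_{i\in Z}w_i^4 \\
& \le & \frac{1}{4}M_f^2\left(\sum_{i\in Z}w_i^2\right)^2 = \frac{1}{4}M_f^2\|w\|^4,
\end{eqnarray*}
that is $\|f(w)\|\le \frac{1}{2}M_f\|w\|^2$. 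Applying this to $w=\chi_{\hat R}(u)$, using $\|w\|\le\hat R$ in one factor and $\|w\|\le L_{D\chi}\|u\|$ in the other, yields $\|f_{\hat R}(u)\|\le \frac{1}{2}M_f\hat R\,L_{D\chi}\|u\|$. The identical computation for $h$ (same vanishing conditions, constant $M_h$) gives $\|h_{\hat R}(u)\|\le \frac{1}{2}M_h\hat R\,L_{D\chi}\|u\|$ in the $L_2(\ell^2)$-norm.

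The genuinely delicate estimate is the Lipschitz bound (\ref{eq23b}), since the crude bound from the global Lipschitz continuity of $h$ only produces the constant $D_h$, with no small factor. To recover a factor $\hat R$ I would differentiate instead: as $h_i'(0)=0$ and $|h_i''|\le M_h$ we have $|h_i'(\zeta)|\le M_h|\zeta|$, so for $w\in\bar B_{\ell^2}(0,\hat R)$,
\begin{eqnarray*}
\|Dh(w)\|_{L(\ell^2,L_2(\ell^2))}^2 & = & \sup_{\|v\|=1}\sum_{i\in Z}|h_i'(w_i)v_i|^2 \\
& \le & M_h^2\sup_{\|v\|=1}\sum_{i\in Z}w_i^2v_i^2 \le M_h^2\|w\|^2,
\end{eqnarray*}
using $\sum_{i\in Z}w_i^2v_i^2\le \|w\|^2\|v\|^2$; hence $\|Dh(w)\|\le M_h\|w\|\le M_h\hat R$ throughout the ball. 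Since $\chi_{\hat R}(u),\chi_{\hat R}(z)\in\bar B_{\ell^2}(0,\hat R)$ and the ball is convex, the segment joining them stays inside, and the mean value inequality gives
\begin{eqnarray*}
\|h_{\hat R}(u)-h_{\hat R}(z)\| & \le & M_h\hat R\,\|\chi_{\hat R}(u)-\chi_{\hat R}(z)\| \\
& \le & M_h\hat R\,L_{D\chi}\|u-z\|,
\end{eqnarray*}
where the last step again uses the $L_{D\chi}$-Lipschitz continuity of $\chi_{\hat R}$.

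It then suffices to take $\hat R\le\min\{\delta,\,2R/M_f,\,R/M_h\}$, so that each of the prefactors $\frac{1}{2}M_f\hat R$, $\frac{1}{2}M_h\hat R$ and $M_h\hat R$ is bounded by $R$; with this choice all three inequalities hold simultaneously (if $M_f$ or $M_h$ vanishes the corresponding constraint is simply dropped). I expect the only real subtlety to be the Lipschitz estimate (\ref{eq23b}): the key is to avoid the Lipschitz constant $D_h$ and instead pass through $Dh$, exploiting $Dh(0)=0$, which is precisely what converts the uncontrolled constant into a quantity proportional to $\hat R$.
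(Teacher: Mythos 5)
Your proof is correct, and the underlying idea is the same as the paper's: compose with the cut--off, use that $\chi_{\hat R}$ maps into $\bar B_{\ell^2}(0,\hat R)$ and is $L_{D\chi}$--Lipschitz, and exploit the vanishing of $f$, $h$ and their first derivatives at the origin so that the relevant Lipschitz constants on a small ball can be made $\le R$. The difference is in how $\hat R$ is produced. The paper argues softly: since $Df(0)=0$ (resp.\ $Dh(0)=0$) and $Df$, $Dh$ are continuous, one can choose $\hat R\le\delta$ with $\sup_{\|v\|\le\hat R}(\|Df(v)\|+\|Dh(v)\|)\le R$, and then all three estimates follow from a single mean value/chain rule step applied to $f\circ\chi_{\hat R}$ and $h\circ\chi_{\hat R}$ (for (\ref{eq13}) and (\ref{eq9}) the paper uses $f(0)=h(0)=0$ and the MVT directly, rather than your quadratic Taylor bound). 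You instead quantify everything through the second--derivative bounds $M_f$, $M_h$, obtaining $\|f(w)\|\le\tfrac12 M_f\|w\|^2$ and $\|Dh(w)\|_{L(\ell^2,L_2(\ell^2))}\le M_h\|w\|$, which yields the explicit choice $\hat R=\min\{\delta,\,2R/M_f,\,R/M_h\}$. Both are valid; your version buys an explicit $\hat R$ that is linear in $R$ for small $R$, which is essentially the content of the lower bound $\hat R(R)/R\ge 1/\kappa$ that the paper has to extract separately (Lemma \ref{l21}) when it later needs temperedness of $\hat R$, whereas the paper's softer formulation is what is literally encoded in the definition (\ref{Rhat}) used in Section \ref{s3}.
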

\begin{proof}
By $Df(0)=0$ and the continuity of $Df$, for any $R>0$ we can choose an $\hat R\le \delta$ such that
\begin{eqnarray*}
 \sup_{\|v\|\le \hat R}\|Df(v)\|_{L(\ell^2)}\le  R.
\end{eqnarray*}
Then for $u\in \ell^2$, since $f(0)=0$ from the mean value theorem we have
\begin{eqnarray*}
  \|f_{\hat R}(u)\|&\le \sup_{z\in \ell^2}\|D(f(\chi_{\hat R}(z)))\| \|u\| \le  \sup_{\|v\|\le \hat R}\|Df(v)\|_{L(\ell^2)}\sup_{z\in \ell^2}\|D\chi_{\hat R}(z)\|\|u\|\\
&\le R L_{D\chi}\|u\|,
  \end{eqnarray*}
and therefore (\ref{eq13}) is shown. Following the same steps we prove (\ref{eq9}).

Finally, by the regularity of $Dh$,
\begin{eqnarray*}
\|h_{\hat R}(u)-h_{\hat R}(z)\|& \le \sup_{\|v\|\le \hat R}\|Dh(v)\|_{L(\ell^2,L_2(\ell^2))}\| \chi_{\hat R} (u)-\chi_{\hat R} (z)\|\\
& \le L_{D\chi} \sup_{\|v\|\leq \hat R}\|Dh(v)\|_{L(\ell^2,L_2(\ell^2))} \|u-z\| \leq R L_{D\chi} \|u-z\|.
\end{eqnarray*}
\end{proof}

Note that, according to the proof of the Lemma \ref{l51}, the relationship between $R$, $\hat R$ and $\delta$ is given by
\begin{eqnarray*}
  &\hat R(\omega)=\max\bigg\{\hat r:\sup_{\|v\| \leq \hat r}(\|Df(v)\|+\|Dh(v)\|)\le  R(\omega)\bigg\}\wedge \delta.
\end{eqnarray*}
Hence, once that we will define $R$, this will be further the precise definition of $\hat R$ in order to ensure exponential stability of the trivial solution of the stochastic lattice model, see (\ref{Rhat}) below.\\

For $n\in Z^+$, we set
\begin{equation}\label{eq8}
  u(t)=u^n(t-n)\quad {\rm if \; }t\in [n,n+1].
\end{equation}
Let us emphasize that the previous function $u$ is defined on the whole positive real line and is H\"older continuous on any interval $[n,n+1]$. However, we cannot claim yet that  $u$ defined by (\ref{eq8}) is our mild solution obtained in Theorem \ref{t2}. The reason is that any $u^n$ is a solution of a modified lattice problem depending of the cut--off function $\chi_{\hat R}$ and driven by a path $\theta_n\omega$. But as we will show below, using the additivity of the integrals, the estimates of the functions $f_{\hat R}$ and $h_{\hat R}$ given in Lemma \ref{l51}, and a suitable choice of the random variables $R$ and $\hat R$, we will end up proving that not only $u$ given by (\ref{eq8}) is the solution of our original stochastic lattice system (\ref{eq2}), but also that it is locally exponential stable with a certain decay rate $\mu$.

In order to prove the previous assertions, we first express $u$ given by (\ref{eq8}), for $t\in [n,n+1]$ as follows
\begin{eqnarray}
 u(t) &=& S_\lambda(t-n)u(n) +\int_n^t S_\lambda(t-r)f_{\hat R(\theta_n\omega)}(u(r))dr+\int_n^tS_\lambda(t-r) h_{\hat R(\theta_n\omega)}(u(r))d\omega(r)\nonumber \\
 & =&  S_\lambda (t)x+\sum_{j=0}^{n-1}S_\lambda(t-j-1) \bigg(\int_{j}^{j+1}S_\lambda(j+1-r)f_{\hat R(\theta_j\omega)}(u(r))dr \nonumber \\
 &&\qquad \qquad+\int_{j}^{j+1}S_\lambda(j+1-r)h_{\hat R(\theta_j\omega)}(u(r))d\omega(r)  \bigg)\nonumber \\
  &&\qquad \qquad+\int_{n}^{t}S_\lambda(t-r)f_{\hat R(\theta_n\omega)}(u(r))dr+\int_n^{t}S_\lambda(t-r)h_{\hat R(\theta_n\omega)}(u(r))d\omega(r) \label{eq10} \\
  &=&S_\lambda (t) x+\sum_{j=0}^{n-1}S_\lambda(t-j-1) \bigg(\int_0^1 S_\lambda(1-r)f_{\hat R(\theta_j\omega)}(u^j(r))dr\nonumber \\
  &&\qquad \qquad+\int_0^1S_\lambda(1-r)h_{\hat R(\theta_j\omega)}(u^j(r))d\theta_j\omega(r)  \bigg)\nonumber \\
  &+&\int_{0}^{t-n}S_\lambda(t-n-r)f_{\hat R(\theta_n\omega)}(u^n(r))dr+\int_0^{t-n}S_\lambda(t-n-r) h_{\hat R(\theta_n\omega)}(u^n(r))d\theta_n\omega(r),\nonumber
\end{eqnarray}
where this splitting is a consequence of the additivity of the integrals, Theorem \ref{t1} and (\ref{change}).

Notice that, in all the integrals on the right hand side of the previous expression, the time varies in the interval $[0,1]$ (in the last two integrals, $[0,t-n]$ is contained in $[0,1]$). Hence, we are going to estimate the H{\"o}lder--norm of all these terms setting now $T_1=0$ and $T_2=1$. Due to the presence of the semigroup $S_\lambda$ as a factor in all terms under the sum, in the following estimates we do not need to consider the $\beta,\rho$--norm but the $\beta$--norm, that is, in what follows $\rho=0$.

Note that by (\ref{eq13}) we have
\begin{eqnarray*}
\bigg\|\int_0^\cdot S_\lambda(\cdot-r)f_{\hat R(\theta_n\omega)}(u^n(r))dr\bigg\|_{\infty}\le R(\theta_n \omega) L_{D\chi} \|u^n\|_{\infty}.
  \end{eqnarray*}
For the H{\"o}lder--seminorm, thanks to (\ref{semi}),
\begin{eqnarray*}
&  &\ltn\int_0^\cdot S_\lambda(\cdot-r) f_{\hat R(\theta_n\omega)}(u^n(r))dr\rtn_{\beta}\\
 & =&\sup_{0\leq s<t\leq 1} \frac{\bigg \|\int_s^tS_\lambda(t-r)f_{\hat R(\theta_n\omega)}(u^n(r))dr+\int_0^s (S_\lambda(t-r)-S_\lambda(s-r))f_{\hat R(\theta_n\omega)}(u^n(r))dr\bigg\|}{(t-s)^{\beta}}\\
  &\le& \sup_{0\leq s<t\leq 1}  \bigg((t-s)^{1-\beta}\sup_{r\in[s,t]}(\|S_\lambda(t-r)\|_{L(\ell^2)}\|f_{\hat R(\theta_n\omega)}(u^n(r))\|)\bigg)\\
  &\,\, +&\sup_{0\leq s<t\leq 1} \bigg(\frac{s}{(t-s)^\beta} \sup_{r\in[0,s]}(\|S_\lambda(t-r)-S_\lambda(s-r)\|_{L(\ell^2)}\|f_{\hat R(\theta_n\omega)}(u^n(r))\|)\bigg)\\
&  \le &R(\theta_n\omega) L_{D\chi}\|u^n\|_{\infty}
 + \|A_\lambda\|R(\theta_n\omega) L_{D\chi} \|u^n\|_{\infty}.
 \end{eqnarray*}
Then
\begin{eqnarray*}
 \bigg \|\int_0^\cdot   & S_\lambda (\cdot-r) f_{\hat R(\theta_n\omega)}(u^n(r))dr \bigg \|_{\beta} \leq (2+\|A_\lambda\|)R(\theta_n\omega) L_{D\chi}\|u^n\|_{\beta}.
\end{eqnarray*}
On the other hand, since $h(0)=0$, by (\ref{eq9}) and (\ref{eq23b}) we get
\begin{eqnarray*}
  \|h_{\hat R(\theta_n\omega)}(u(\cdot))\|_\beta&=&\sup_{t\in[0,1]}\|h_{\hat R(\theta_n\omega)}(u(t))\|\\
  &+&
  \sup_{0\le r<q\le 1}\frac{\|h_{\hat R(\theta_n\omega)}(u(r))-h_{\hat R(\theta_n\omega)}(u(q))\|}{(r-q)^\beta}\\
 & \le &  L_{D\chi} R(\theta_n\omega)\|u\|_\beta.
\end{eqnarray*}
For the stochastic integral, thanks to (\ref{eh2}) and (\ref{eh3}) we obtain
\begin{eqnarray*}
 & & \bigg\|\int_0^{\cdot} S_\lambda(\cdot-r) h_{\hat R(\theta_n\omega)}(u^n(r))d\theta_n\omega(r)\bigg\|_\beta\\
  &\le &c_{\beta,\beta^\prime}\ltn\theta_n\omega\rtn_{\beta^\prime}
  (1+\|A_\lambda\|)(2+\|A_\lambda\|)\|h_{\hat R(\theta_n\omega)}(u^n(\cdot))\|_\beta\\
  &\leq&  L_{D\chi}c_{\beta,\beta^\prime}\ltn\theta_n\omega\rtn_{\beta^\prime}
 (1+\|A_\lambda\|)(2+\|A_\lambda\|)R(\theta_n\omega)\|u^n\|_\beta.
  \end{eqnarray*}

 For the terms under the sum we have
\begin{eqnarray*}
 & & \bigg\| S_\lambda(\cdot-j-1)\int_{0}^{1}S_\lambda(1-r)f_{\hat R(\theta_j\omega)}(u^j(r))dr \bigg \|_{\beta,n,n+1}\\
 &=&\| S_\lambda(\cdot-j-1)\|_{\beta,n,n+1} \bigg\|\int_{0}^{1}S_\lambda(1-r) f_{\hat R(\theta_j\omega)}(u^j(r))dr\bigg \|\\
   &\leq & \|S_\lambda(\cdot-j-1) \|_{\beta,n,n+1} \bigg\| \int_{0}^{\cdot}S_\lambda(\cdot-r) f_{\hat R(\theta_j\omega)}(u^j(r))dr\bigg \|_{\infty},
\end{eqnarray*}
and from Lemma \ref{l3},
$$ \|S_\lambda(\cdot-j-1) \|_{\beta,n,n+1} \leq (1+\|A_\lambda\|)e^{-\lambda(n-j-1)},$$
such that
\begin{eqnarray*}
 \bigg\| & S_\lambda(\cdot-j-1)\int_{0}^{1}S_\lambda(1-r)f_{\hat R(\theta_j\omega)}(u^j(r))dr \bigg \|_{\beta,n,n+1}\\
& \leq (1+\|A_\lambda\|) e^{-\lambda(n-j-1)}  R(\theta_j\omega) L_{D\chi}\|u^j\|_{\beta}.
\end{eqnarray*}

Following similar steps, thanks to (\ref{eh2}) we have
\begin{eqnarray*}
 \bigg\| &S_\lambda(\cdot-j-1) \int_{0}^{1}S_\lambda(1-r) h_{\hat R(\theta_j\omega)}(u^j(r))d\theta_j\omega(r)\bigg \|_{\beta,n,n+1}\\
 &\leq L_{D\chi} c_{\beta,\beta^\prime} \ltn \theta_j\omega \rtn_{\beta^\prime} (1+\|A_\lambda\|)^2 e^{-\lambda(n-j-1)}    R(\theta_j\omega) \|u^j\|_{\beta}.
  \end{eqnarray*}

Therefore, taking the $\|\cdot\|_{\beta,n,n+1}$ norm of the different terms in (\ref{eq10}), applying the triangle inequality and in view of the above estimates, we obtain
\begin{eqnarray}
  \|u^n\|_{\beta}&\le &   \|S_\lambda\|_{\beta,n,n+1} \|x\| +C\sum_{j=0}^{n-1}
   R(\theta_j\omega)(1+\ltn \theta_j\omega \rtn_{\beta^\prime})\|u^j\|_{\beta}e^{-\lambda(n-j-1)}\nonumber \\[-1.5ex]
   \label{esti}\\[-1.5ex]
   &+ &C R(\theta_n\omega) (1+\ltn \theta_n\omega \rtn_{\beta^\prime}) \|u^n\|_{\beta},\nonumber
\end{eqnarray}
where $C=\max \{1,c_{\beta,\beta^\prime} \} L_{D\chi}(1+\|A_\lambda\|)(2+\|A_\lambda\|).$

Let now $\hat\eps \in (0,1)$, that will be determined later more precisely. Define the variables $R$ and $\hat R$ as follows:
\begin{eqnarray*}
  R(\omega)=\frac{\hat \eps}{2C(1+\ltn \omega \rtn_{\beta^\prime})}
\end{eqnarray*}
and
\begin{eqnarray}\label{Rhat}
  &\hat R(\omega)=\max\bigg\{\hat r:\sup_{\|v\| \leq \hat r}(\|Df(v)\|+\|Dh(v)\|)\le  R(\omega)\bigg\}\wedge \delta.
\end{eqnarray}
$\hat R(\omega)$ is a random variable, see \cite{GaNeSch16}. In addition, since $\ltn\omega \rtn_{\beta^\prime}$ is tempered from above then $R$ is tempered from below.
According to Lemma \ref{l21} it follows that $\hat R$ is tempered. In the contrary case we have  $\omega\in\Omega$, $\mu\in R^+\setminus \{0\}\cup \{+\infty\}$ and a sequence $(t_i)_{i\in N}$ tending to $+\infty$ or $-\infty$ such that
\[
  \hat R(\theta_{t_i}\omega)\le e^{-\mu|t_i|}.\nonumber
\]
But then for sufficiently large $i$ we have
\[
  R(\theta_{t_i}\omega)\le \frac{1}{\kappa}e^{-\mu|t_i|}\nonumber
\]
contradicting the temperedness of $R$
 (here we have applied Lemma \ref{l21}, with  $F(\hat r):= \sup_{\|v\| \leq \hat r}(\|Df(v)\|+\|Dh(v)\|)\le  R(\omega)$. Notice that the assumptions of that lemma are fulfilled thanks to the regularity properties of the functions $f$ and $h$. In particular, we need any $f_i$ to be two times differentiable).
%In particular, the random variable $\hat R$ is tempered from below if we replace $\omega$ by $B^H(\omega)$, see Section \ref{s1}.

With the above choice of $R$,  coming back to (\ref{esti}), since $\hat \epsilon <1$ we obtain
\begin{eqnarray*}
  \frac{1}{2}\|u^n\|_{\beta}\le &  \|S_\lambda\|_{\beta,n,n+1} \|x\|  +\frac{\hat \eps}{2} \sum_{j=0}^{n-1} e^{-\lambda (n-j-1)} \|u^j\|_{\beta},
\end{eqnarray*}
hence
\begin{eqnarray*}
\|u^n\|_{\beta}  \le& 2 (1+\|A_\lambda\|) \|x\|e^{-\lambda n}+\hat\eps \sum_{j=0}^{n-1}e^{-\lambda(n-j-1)}  \|u^j\|_{\beta}.
  \end{eqnarray*}
Defining $y_n=\|u^n\|_\beta e^{\lambda n}$, $c= 2 (1+\|A_\lambda\|)$ and $g_j=\hat \eps e^\lambda$, Lemma \ref{l9} ensures that
\begin{eqnarray*}
y_n \le  2(1+\|A_\lambda\|) \|x\| \prod_{i=0}^{n-1} (1+ \hat \eps e^{\lambda})=2(1+\|A_\lambda\|) \|x\| (1+ \hat \eps e^{\lambda})^n,
\end{eqnarray*}
hence
\begin{eqnarray}\label{eun}
\|u^n\|_{\beta}\le 2(1+\|A_\lambda\|) \|x\| e^{-n(\lambda-\log(1+\hat\eps e^{\lambda}))}.
\end{eqnarray}

On the other hand, due to Lemma \ref{l8}, since $\hat R(\omega)/2$ is tempered from below, we can find a zero neighborhood $U$ depending on $\omega$  such that for $x$ contained in this neighborhood we have
\begin{equation}\label{enm}
  \|u^n\|_{\beta}\le \frac{\hat R(\theta_n\omega)}{2}\quad {\rm for \; all\; } n\in Z^+,
\end{equation}

\smallskip

As we will show in the next result, (\ref{enm}) is a crucial estimate to prove that the sequence of truncated solutions $(u^n)_{n\in N}$ defines a solution of (\ref{eq2}) on $R^+$. In fact, thanks to the previous considerations, we can state the main result of the paper:\\

\begin{theorem}\label{t4}
Suppose that conditions ${\bf (A1)-(A4)}$ and ${\bf (A3')-(A4')}$ hold and consider $\hat \eps (\lambda)=\hat \eps \in (0,1-e^{-\lambda})$.  Then the trivial solution is exponentially stable with an exponential rate less than or equal to $\mu <\lambda-\log(1+\hat\eps e^\lambda)$.\\
\end{theorem}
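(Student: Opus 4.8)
The plan is to assemble the two facts obtained just above the statement: the estimate (\ref{eun}), which holds for every $x$, and the smallness bound (\ref{enm}), which holds for $x$ in a random neighborhood $U(\omega)$ of the origin. The strategy is to show that for $x\in U(\omega)$ the cut--off functions $f_{\hat R}$ and $h_{\hat R}$ are never activated, so that the concatenation $u$ of the iterates $u^n$ defined by (\ref{eq8}) is precisely the cocycle $\varphi(\cdot,\omega,x)$, and then to read off the exponential decay from (\ref{eun}). Before anything else I would check that the admissible range of $\hat\eps$ produces a positive rate: from $\hat\eps\in(0,1-e^{-\lambda})$ one gets $\hat\eps e^\lambda<e^\lambda-1$, hence $1+\hat\eps e^\lambda<e^\lambda$, and therefore $\mu:=\lambda-\log(1+\hat\eps e^\lambda)>0$; the same conclusion then holds for any smaller positive value of $\mu$.

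The conceptual core is the identification $u=\varphi(\cdot,\omega,x)$. Fix $x\in U(\omega)$. By (\ref{enm}), for every $j\in Z^+$ one has $\|u^j\|_\infty\le\|u^j\|_\beta\le\hat R(\theta_j\omega)/2$, and since $u(r)=u^j(r-j)$ on $[j,j+1]$ while $\chi_{\hat R}$ acts as the identity on the ball $\bar B_{\ell^2}(0,\hat R/2)$, it follows that $f_{\hat R(\theta_j\omega)}(u(r))=f(u(r))$ and $h_{\hat R(\theta_j\omega)}(u(r))=h(u(r))$ for $r\in[j,j+1]$. Substituting these identities into the expansion (\ref{eq10}) and using the semigroup identity $S_\lambda(t-j-1)S_\lambda(j+1-r)=S_\lambda(t-r)$ together with the additivity of both integrals, the telescoping sum collapses and (\ref{eq10}) reduces to the unmodified mild equation (\ref{eq2}). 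By the uniqueness in Theorem \ref{t2} we conclude that $u(t)=\varphi(t,\omega,x)$ for all $t\in R^+$.

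It then remains to pass from the discrete bound to the continuous estimate of Definition \ref{d1}. For $t\in[n,n+1]$ we have $\|\varphi(t,\omega,x)\|=\|u^n(t-n)\|\le\|u^n\|_\infty\le\|u^n\|_\beta$, so (\ref{eun}) yields $\|\varphi(t,\omega,x)\|\le 2(1+\|A_\lambda\|)\|x\|e^{-n\mu}$. Since $n\ge t-1$ we have $e^{-n\mu}\le e^{\mu}e^{-\mu t}$, whence, taking the supremum over $x\in U(\omega)$ and setting $\alpha(\omega):=2(1+\|A_\lambda\|)e^{\mu}\sup_{x\in U(\omega)}\|x\|$, we obtain $\sup_{x\in U(\omega)}\|\varphi(t,\omega,x)\|\le\alpha(\omega)e^{-\mu t}$, which is exactly exponential stability with rate $\mu$.

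I expect the genuine difficulty to lie not in these manipulations but in the identification step, that is, in guaranteeing that the cut--off remains inactive \emph{simultaneously for every} $n$. This is precisely where temperedness enters: the bound (\ref{eun}) controls $\|u^n\|_\beta$ \emph{exponentially} in $n$, whereas $\hat R(\theta_n\omega)$ could a priori decay along the orbit; temperedness of $\hat R$ from below (derived via Lemma \ref{l21} from the temperedness from below of $R$) ensures that $\hat R(\theta_n\omega)$ decays at most \emph{subexponentially}, so that a single smallness requirement on $\|x\|$ enforces $\|u^n\|_\beta\le\hat R(\theta_n\omega)/2$ for all $n$ at once and, in particular, that the neighborhood $U(\omega)$ has strictly positive radius for almost every $\omega$.
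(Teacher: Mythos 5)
Your proof is correct and follows essentially the same route as the paper: use (\ref{enm}) together with the temperedness of $\hat R$ to keep the cut--off inactive for all $n$ simultaneously, identify the concatenation of the iterates $u^n$ with the solution of (\ref{eq2}) (hence with the cocycle $\varphi$) via uniqueness, and read off the decay from (\ref{eun}). The only, harmless, difference is in the final bookkeeping: you pass from the discrete bound to continuous time directly via $n\ge t-1$ at the cost of a factor $e^{\mu}$, whereas the paper splits time at a random $T_0(\omega,\eps)$ and gives up an arbitrarily small $\eps$ in the rate; both yield the stated conclusion.
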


\begin{proof}
First of all, let us prove that $u$ given by (\ref{eq8}) is a solution of our original stochastic lattice system. In fact, from (\ref{enm}) we deduce that for any $j\in Z^+$ and any $r\in [0,1]$ we have that
\begin{eqnarray*}
\|u^j(r)\|\leq \frac{\hat R(\theta_j\omega) }{2}.
\end{eqnarray*}
Consequently $\chi_{\hat R(\theta_j \omega)} (u^j (r))=u^j (r)$, hence
\begin{eqnarray*}
  f_{\hat R(\theta_j\omega)}(u^j(r))=f(u^j(r)),\quad h_{\hat R(\theta_j\omega)}(u^j(r))=h(u^j(r))
\end{eqnarray*}
for any $r\in [0,1]$ and $j\in Z^+$.  Then $u$ given by (\ref{eq8}), where $u^n$ solves (\ref{eq2b}), is a solution of (\ref{eq2}) on $R^+$.\\

Now we show the exponential stability of the trivial solution, according to the Definition \ref{d1}. Take $\hat \eps \in (0,1-e^{-\lambda})$, $\eps>0$ small enough and $\mu <\lambda-\log(1+\hat\eps e^\lambda)-\eps$.  From (\ref{eun}) we derive that there exist $T_0(\omega, \eps)\in N$ and a neighborhood of zero $U(\omega)$ such that if $x\in U(\omega)$, for $t\geq T_0(\omega,\eps)$
 \begin{eqnarray*}
\|\varphi(t,\omega,x)\| &\leq &\alpha_1(\omega) e^{-(\lambda-\log(1+\hat\eps e^{\lambda})-\eps)t}\leq \alpha_1(\omega) e^{-\mu t},
\end{eqnarray*}
where the positive random variable $\alpha_1(\omega)$ depends on the coefficients $\kappa$ and $\|A_\lambda\|$.

On the other hand, from (\ref{enm}) we derive
 \begin{eqnarray*}
\sup_{x\in U(\omega), t\in [0,T_0(\omega,\eps)]}\|\varphi(t,\omega,x)\|&\le& \sup_{x\in U(\omega), 0\le n\le T_0(\omega,\eps)}\|u^n\|_\beta \leq   \alpha_2(\omega) e^{-\mu t} ,
\end{eqnarray*}
where
$$\alpha_2(\omega)=\sup_{0\le n\le T_0(\omega,\eps)} \frac{\hat R(\theta_n\omega)}{2} e^{\mu T_0} .$$
Therefore,
 \begin{eqnarray*}
\sup_{x\in U(\omega)}\|\varphi(t,\omega,x)\|&\le& \sup_{x\in U(\omega),t\in [0,T_0(\omega,\eps)]}\|\varphi(t,\omega,x)\|+\sup_{x\in U(\omega), t\geq T_0(\omega,\eps)}\|\varphi(t,\omega,x)\| \\
&\leq & (\alpha_1(\omega)+\alpha_2(\omega)) e^{-\mu t}.
\end{eqnarray*}
\end{proof}

%%%%%%%%%%%%%%%%%%%%%%%%%
\section{Appendix}
In this section we present some technical results that we have used in Section \ref{s3}.

First of all, we introduce a discrete Gronwall-like lemma, whose proof can be derived easily from Lemma 100 in \cite{Dra}.
\begin{lemma}\label{l9}
Let $(y_n)$ and $(g_n)$ be nonnegative sequences and $c$ a nonnegative constant. If
$$y_n\leq c+ \sum_{j=0}^{n-1}  g_j y_j$$
then
$$y_n \leq c \prod_{j=0}^{n-1}  (1+g_j).$$

\end{lemma}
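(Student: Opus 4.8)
The plan is to prove the estimate by induction on $n$, introducing the abbreviation $P_n:=\prod_{j=0}^{n-1}(1+g_j)$ for the target product with the empty-product convention $P_0=1$. The base case $n=0$ is immediate: the sum $\sum_{j=0}^{-1}g_jy_j$ is empty, so the hypothesis reduces to $y_0\le c=cP_0$. This also fixes the indexing convention (sequences indexed from $0$), which I would state at the outset to avoid ambiguity in the empty sum and empty product.

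For the inductive step I would assume $y_k\le cP_k$ for every $k<n$ and substitute these bounds into the right-hand side of the hypothesis. Because each $g_j$ is nonnegative, this substitution preserves the inequality term by term, giving
$$y_n\le c+\sum_{j=0}^{n-1}g_jy_j\le c+c\sum_{j=0}^{n-1}g_jP_j.$$
The one step that carries the whole argument is the telescoping identity $g_jP_j=P_{j+1}-P_j$, which follows at once from $P_{j+1}=P_j(1+g_j)$. Summing it yields $\sum_{j=0}^{n-1}g_jP_j=P_n-P_0=P_n-1$, so the right-hand side collapses exactly to $c+c(P_n-1)=cP_n$, closing the induction.

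Since this is the classical discrete Gronwall inequality (the authors note it follows from Lemma 100 in \cite{Dra}), I do not anticipate a genuine obstacle; the entire content is the telescoping cancellation. The only points requiring a moment's care are the nonnegativity hypotheses on $(y_n)$, $(g_n)$ and $c$: they guarantee both that replacing $y_j$ by its larger majorant $cP_j$ under the sum preserves the inequality, and that the factors $1+g_j$ are positive so that no sign reversal occurs. I would make these observations explicit to keep the short induction fully self-contained.
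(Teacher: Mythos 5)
Your proof is correct: the induction with the telescoping identity $g_jP_j=P_{j+1}-P_j$ is the standard argument for this discrete Gronwall inequality, and your handling of the empty sum/product at $n=0$ and of the nonnegativity hypotheses is sound. The paper itself gives no proof, deferring to Lemma 100 of \cite{Dra}, so your argument is a complete and self-contained substitute for that citation rather than a departure from the authors' approach.
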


\begin{lemma}\label{l8}
Suppose that  $R_i\ge C_\eps e^{-\eps i}$ for any $0< \eps< \mu$ and $i\in N$, where $C_\eps>0$.
Let $(v_i)_{i\in N}$ be a sequence such that $v_i\le  v_0e^{-\mu i}$.
Then for sufficiently small $v_0$ we have for any $i\in N$
\begin{eqnarray*}
  v_i\le R_i.
\end{eqnarray*}
\end{lemma}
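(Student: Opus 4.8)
The plan is to exploit the strict gap between the exponential decay rate $\mu$ of the sequence $(v_i)$ and the arbitrarily small rate $\eps$ permitted in the lower bound for $(R_i)$. The hypothesis $R_i\ge C_\eps e^{-\eps i}$ for every $\eps\in(0,\mu)$ is precisely the subexponential decay one reads off from temperedness from below (in the application $R_i$ will be $\hat R(\theta_i\omega)/2$, while $v_i=\|u^i\|_\beta$ with $v_0$ proportional to $\|x\|$ by (\ref{eun})). Since $(v_i)$ decays strictly faster than any such $C_\eps e^{-\eps i}$, the whole task reduces to controlling the amplitude $v_0$, i.e.\ the size of the initial datum.

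Concretely, I would first fix one value of $\eps$ with $0<\eps<\mu$, say $\eps=\mu/2$, and use the hypothesis to obtain the corresponding constant $C_\eps>0$ with $R_i\ge C_\eps e^{-\eps i}$ for all $i\in N$. Next, for each $i$ I would factor the bound on $v_i$ so as to separate the two rates:
\[
v_i\le v_0 e^{-\mu i}=v_0\,e^{-(\mu-\eps)i}\,e^{-\eps i}\le v_0\,e^{-\eps i},
\]
where the last inequality uses $\mu-\eps>0$ and $i\ge 0$, so that $e^{-(\mu-\eps)i}\le 1$. The point of this step is that the right-hand side now carries exactly the exponential profile $e^{-\eps i}$ of the lower bound for $R_i$, with $v_0$ as the only remaining free parameter.

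The choice of $v_0$ is then immediate and, crucially, uniform in $i$: it suffices to take $v_0\le C_\eps$. With this choice one chains the estimates to obtain
\[
v_i\le v_0\,e^{-\eps i}\le C_\eps e^{-\eps i}\le R_i,\qquad i\in N,
\]
which is the claim. Back in Section~\ref{s3} this threshold on $v_0$ is what determines the radius of the zero-neighborhood $U(\omega)$ entering (\ref{enm}).

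I do not expect any genuine obstacle here; the only point needing a little care is that the bound on $v_0$ must be independent of the index $i$. This is exactly why I would peel off the factor $e^{-(\mu-\eps)i}\le 1$ first: once the $i$-dependence has been removed from the amplitude comparison, the single condition $v_0\le C_\eps$ works simultaneously for all $i$. The argument is moreover insensitive to whether $N$ starts at $0$ or $1$, since $e^{-(\mu-\eps)i}\le 1$ holds in either case.
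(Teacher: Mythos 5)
Your proof is correct, and it is the natural one-line argument the authors have in mind: the paper omits the proof entirely, remarking only that "the proof of this result follows easily," and your chain $v_i\le v_0e^{-\mu i}\le v_0e^{-\eps i}\le C_\eps e^{-\eps i}\le R_i$ with any fixed $\eps\in(0,\mu)$ and $v_0\le C_\eps$ fills that gap exactly as intended.
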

The proof of this result follows easily. Note that if for instance we assume that a random variable $R>0$ is tempered from below, then we can find a random variable $C_\eps>0$ such that $v_i<R(\theta_i\omega)$ holds for $v_0<C_\eps(\omega)$.\\

The following result establishes the relationship between the random variables $R$ and $\hat R$ as needed in Section \ref{s3}.\\
\begin{lemma}\label{l21}
Let $(V,\|\cdot\|)$ be some Banach space and let $F\not\equiv0$ be a function from $\bar B(0,\rho)\subset V$ into  $V$ with $F(0)=0$ which is continuously differentiable   such that
\begin{eqnarray*}
 \sup_{z\in \bar B(0,\rho)}\|DF(z)\|=\kappa<\infty.
\end{eqnarray*}
Consider the  centered open ball $B(0,R)$, $R>0,$ in $V$.   Let $B(0,\hat R)\subset V$, $\hat R=\hat R(R)\le\rho$  be the
supremum of all numbers $\hat r>0$ such that
\begin{eqnarray*}
  B(0,{\hat r})\subset F^{-1}(B(0,R)).
\end{eqnarray*}
Then, for $0\leq R < \sup\{\|F(z)\|,\, z\in\bar B(0,\rho)\}$,
\begin{eqnarray}\label{eq27}
 \sup_{z\in \bar B(0,\hat R(R))}\|F(z)\|\le R,\qquad  \frac{\hat R(R)}{R}\ge \frac{1}{\kappa}\in (0,\infty].
\end{eqnarray}
\end{lemma}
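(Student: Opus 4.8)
The plan is to prove the two assertions in (\ref{eq27}) separately, both resting on the elementary Lipschitz estimate furnished by the mean value inequality. First I would record the only analytic input: since $F(0)=0$ and $\|DF(z)\|\le\kappa$ on $\bar B(0,\rho)$, the mean value theorem gives $\|F(z)\|=\|F(z)-F(0)\|\le\kappa\|z\|$ for every $z\in\bar B(0,\rho)$. In particular $\sup\{\|F(z)\|:z\in\bar B(0,\rho)\}\le\kappa\rho$, and the hypothesis $F\not\equiv0$ forces $\kappa>0$, for otherwise $DF\equiv0$ would make $F$ constant and hence identically $F(0)=0$. Everything else is a matter of unwinding the definition of $\hat R$.

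For the first inequality I would exploit that the defining property of $\hat R$ is monotone in the radius: if $B(0,\hat r)\subset F^{-1}(B(0,R))$ then the same inclusion holds for every smaller radius, so the admissible radii form an interval $(0,\hat R)$, possibly together with $\hat R$ itself. Consequently, for any $z$ with $\|z\|<\hat R$ one can select an admissible $\hat r\in(\|z\|,\hat R)$, whence $z\in B(0,\hat r)\subset F^{-1}(B(0,R))$ and $\|F(z)\|<R$. To extend this to the closed ball I would pass to a limit: given $z$ with $\|z\|=\hat R$, approximate it by $z_n\to z$ with $\|z_n\|<\hat R$, so that $\|F(z_n)\|<R$, and invoke the continuity of $F$ to conclude $\|F(z)\|\le R$. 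Taking the supremum over $\bar B(0,\hat R)$ then yields the first claim in (\ref{eq27}).

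The second inequality is where the hypothesis $R<\sup\{\|F(z)\|:z\in\bar B(0,\rho)\}$ enters, and I expect this to be the main point to get right. Combining it with the bound $\sup\|F\|\le\kappa\rho$ from the first step yields $R<\kappa\rho$, that is, $R/\kappa<\rho$, so the ball $B(0,R/\kappa)$ stays inside the domain $\bar B(0,\rho)$ on which $DF$ is controlled. For $z\in B(0,R/\kappa)$ the Lipschitz estimate gives $\|F(z)\|\le\kappa\|z\|<\kappa\cdot(R/\kappa)=R$, so $B(0,R/\kappa)\subset F^{-1}(B(0,R))$; thus $R/\kappa$ is an admissible radius and, by the supremum definition of $\hat R$, we obtain $\hat R\ge R/\kappa$, which is exactly $\hat R/R\ge 1/\kappa$.

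The only genuine obstacle is this bookkeeping about the domain: one must verify that $B(0,R/\kappa)$ does not escape $\bar B(0,\rho)$, and this is precisely what $R<\sup\|F\|$ guarantees. Without it, for $R$ exceeding every value $\|F(z)\|$, the radius $\hat R$ would merely be capped by $\rho$ and the ratio bound could fail. I would close by remarking that $\kappa>0$ makes $1/\kappa\in(0,\infty)$, which lies in the stated range $(0,\infty]$.
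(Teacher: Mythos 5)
Your proof is correct, but it takes a genuinely different and more economical route than the paper's. The paper first recasts the definition of $\hat R$ in terms of the level set $f_F^{-1}(\{R\})$, where $f_F(z)=\|F(z)\|$, and proves the first inequality by a connectedness argument (the continuous image of a ball is an interval, so if some $\hat z\in B(0,\hat R)$ had $f_F(\hat z)>R$, the level set would already meet $B(0,\|\hat z\|)$, contradicting the definition of $\hat R$); for the ratio bound it then produces, for each $\epsilon>0$, points $x^R_\epsilon\in B(0,\hat R)$ and $y^R_\epsilon\in f_F^{-1}(\{R\})$ with $\|x^R_\epsilon-y^R_\epsilon\|<\epsilon$ and passes to the limit in $\|x^R_\epsilon\|/f_F(y^R_\epsilon)$ using Taylor-type Lipschitz estimates. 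You instead work directly with the definition as stated: the admissible radii are downward closed, so $\|F\|<R$ on $B(0,\hat R)$ and $\le R$ on the closed ball by continuity; and for the second inequality you simply exhibit $R/\kappa$ as an admissible radius, using $\|F(z)\|\le\kappa\|z\|$ together with $R<\sup\|F\|\le\kappa\rho$ to check that $B(0,R/\kappa)$ stays inside the domain $\bar B(0,\rho)$. Your argument avoids both the connectedness step and the approximating sequences, and it isolates exactly where the hypothesis $R<\sup\{\|F(z)\|\}$ enters; the paper's level-set formulation buys a geometric picture of $\hat R$ as essentially the distance from the origin to $f_F^{-1}(\{R\})$, but at the cost of a longer limit computation. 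Both arguments rest on the same analytic input, namely the mean value inequality $\|F(z)\|\le\kappa\|z\|$ and the observation that $F\not\equiv 0$ forces $\kappa>0$.
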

\begin{proof}
Denote
\begin{eqnarray*}
  f_F:\bar B(0,\rho)\to R^+,\,f_F(z)=\|F(z)\|, \, z\in \bar B(0,\rho).
\end{eqnarray*}
Let us define
\begin{eqnarray*}
  \hat R=\sup\{0<\hat r\le \rho: B(0,\hat r)\cap f_F^{-1}(\{R\})=\emptyset\}.
\end{eqnarray*}
Note that $f_F^{-1}(\{R\})\not =\emptyset$ since $R\in f_F(\bar B(0,\rho))$. Moreover, the set defining $\hat R$ is nonempty since
$$f^{-1}_F([0,R)) \cap f^{-1}_F(\{R\})=\emptyset, \quad 0\in f^{-1}_F([0,R)),$$
therefore by the continuity of $f_F$ there exists always a positive $\hat r$ such that $B(0,\hat r)\subset f^{-1}_F([0,R))$.

On the other hand, the ball $B(0,\hat R)$ does not contain a $\hat z$ such that $R<f_F(\hat z)=:R_1$.
In the other case, by the continuity of $f_F$, the set $f_F(B(0,\|\hat z\|))$ would contain the interval $[0,R_1)$ which includes $R$ and there would exist a $\hat{\hat z} \in V$ with
\begin{eqnarray*}
  \|\hat{\hat z}\|\le \|\hat z\|<\hat R ,\quad f_F(\hat{\hat z})=R
\end{eqnarray*}
which contradicts the definition of $\hat R$. Note that by the connectedness of balls their images by a continuous function are intervals. Hence
\begin{eqnarray*}
  B(0,\hat R)\cap f_F^{-1}((R,\infty))=\emptyset \quad {\rm and }\quad \bar B(0,\hat R)\cap f_F^{-1}((R,\infty))=\emptyset
\end{eqnarray*}
which proves the first part of (\ref{eq27}).\\
Furthermore, by the definition of $\hat R$ for every $\eps>0$ sufficiently small there exist $x_\eps^R\in B(0,\hat R)$, with $\inf_{\eps>0} \|x_\eps^R\| >0$, and $y_\eps^R\in f_F^{-1}(\{R\})$ such that $\|x_\eps^R-y_\eps^R\|<\eps$.
Hence
\begin{eqnarray*}
  \frac{\hat R}{R}&=&\lim_{\eps\to 0}\frac{\|x_\eps^R\|}{f_F(y_\eps^R)} \ge \liminf_{\eps\to 0}\frac{\|x_\eps^R\|}{f_F(x_\eps^R)+\sup_{z\in \bar B(0,\rho)}\|DF(z)\|\eps}\\
 & \ge&\lim_{\eps\to 0}\frac{\|x_\eps^R\|}{\sup_{z\in \bar B(0,\rho)}\|DF(z)\|(\|x_\eps^R\|+\eps)}=\frac{1}{\kappa}\lim_{\eps\to 0}\frac{\|x_\eps^R\|}{\|x_\eps^R\|+\eps}=\frac{1}{\kappa}
\end{eqnarray*}
since by Taylor's formula
\begin{eqnarray*}
   f_F(y_\eps^R)&\le& f_F(x_\eps^R)+\|F(x_\eps^R)-F(y_\eps^R)\|\le f_F(x_\eps^R)+\sup_{z\in \bar B(0,\rho)}\|DF(z)\|\eps,\\
   f_F(x_\eps^R)&\le& \sup_{z\in \bar B(0,\rho)}\|DF(z)\|\|x_\eps^R\|.
\end{eqnarray*}

\end{proof}

Note that it is not necessary to consider $F\equiv 0$, because the results derived from the last lemma follow trivially.\\

\bibliography{lattice}\bibliographystyle{plain}

\end{document}